\theoremstyle{plain}
\newtheorem{theorem}{Theorem}[section]
\newtheorem{definition}[theorem]{Definition}
\newtheorem{lemma}[theorem]{Lemma}
\newtheorem{proposition}[theorem]{Proposition}
\newtheorem{assumption}[theorem]{Assumption}
\theoremstyle{remark}
\newtheorem{remark}[theorem]{Remark}
\numberwithin{equation}{section}
\newcommand{\C}{\mathbb{C}}
\newcommand{\R}{\mathbb{R}}
\newcommand{\Z}{\mathbb{Z}}
\newcommand{\F}{\mathcal{F}}
\renewcommand{\Im}{\operatorname{Im}}
\renewcommand{\Re}{\operatorname{Re}}
\newcommand{\I}{\infty}
\newcommand{\abs}[1]{\left\lvert #1\right\rvert}
\newcommand{\wha}[1]{\widehat{#1}}
\newcommand{\norm}[1]{\left\lVert #1\right\rVert}
\newcommand{\Lebn}[2]{\left\lVert #1 \right\rVert_{L^{#2}}}
\newcommand{\Jbr}[1]{\left\langle #1 \right\rangle}
\def\Sch{{\mathcal S}} 
\def\({\left(}
\def\){\right)}
\def\<{\left\langle}
\def\>{\right\rangle}
\def\le{\leqslant}
\def\ge{\geqslant}
\def\d{{\partial}}
\def \v{\varphi}
\def \f{\phi}
\def \e{\varepsilon}
\def \l{\lambda}
\def \D{\Delta}
\def \pa{\partial}
\def \n{\nabla}
\def \s{\sigma}
\def \a{\alpha}
\def \n{\nabla}
\def \t{\theta}
\def \F{\mathcal{F}}
\newcommand{\eps}{\varepsilon}
\DeclareMathOperator{\supp}{supp}
\newcommand{\todayd}{\the\year/\the\month/\the\day}
\theoremstyle{definition}
\begin{document}
\title[Critical homogeneous NLS]
{Nonexistence of scattering and modified scattering states for some nonlinear Schr\"odinger equation with critical homogeneous nonlinearity}

\author[S. Masaki]{Satoshi MASAKI}
\address[]{Division of Mathematical Science, Department of Systems Innovation, Graduate School of Engineering Science, Osaka University, Toyonaka, Osaka, 560-8531, Japan}
\email{masaki@sigmath.es.osaka-u.ac.jp}
\author[H. Miyazaki]{Hayato MIYAZAKI}
\address[]{Advanced Science Course, Department of Integrated Science and Technology, National Institute of Technology, Tsuyama College, Tsuyama, Okayama, 708-8509, Japan}
\email{miyazaki@tsuyama.kosen-ac.jp}

\keywords{Nonlinear Schr{\"o}dinger equations, Scattering, Modified scattering}
\subjclass[2010]{35B44, 35Q55, 35P25}
\date{}

\maketitle
\vskip-5mm
\begin{abstract}
We consider large time behavior of solutions to the nonlinear Schr\"odinger equation with a homogeneous nonlinearity of the critical order which is not necessarily a polynomial. 
%We give a sufficient condition on the nonlinearity for that our equation admits a solution which behaves like a free solution with or without a logarithmic phase correction. 
% As for NLS equations, effect of a nonlinearity $|u|^{1+2/d}$ is dominant with respect to the large time behavior.
We treat the case in which the nonlinearity contains non-oscillating factor $|u|^{1+2/d}$.
The case is excluded in our previous studies. % \cite{MM2,MMU}.
% because of the difficulty to predict typical behavior in general setting.
It turns out that there are no solutions that behave like a free solution with or without logarithmic phase corrections.
%  then the solution is equivalent to the trivial solution in any space dimensions.
% The result is extension of \cite{STs} and complementary study of \cite{MM2,MMU}. 
We also prove nonexistence of an asymptotic free solution in the case that the gauge invariant nonlinearity is dominant, and give a finite time blow-up result. 
\end{abstract} 
\section{Introduction}

In this paper, we consider large time behavior of solutions to
 nonlinear Schr\"odinger equation
\begin{equation}\label{eq:NLS}\tag{NLS}
	i \pa_t u + \Delta u = F(u), %\quad u(0,x) = u_0(x)
\end{equation}
where $(t,x) \in \R^{1+d}$ and $u=u(t,x)$ is a complex-valued unknown function. 
The nonlinearity $F$ is homogeneous of degree $1+2/d$, that is, $F$ satisfies the condition
\begin{equation}\label{eq:cond1}
	F(\lambda u) = \lambda^{1+\frac2d} F(u)
\end{equation}
for any $u\in \C$ and $\lambda >0$. 

It is known that the degree $1+2/d$ in the assumption \eqref{eq:cond1} is critical in view of large time behavior.
More precisely, the behavior of a solution depends on the shape of the nonlinearity \cite{Oz,GO,MTT,STo,HNST,HNW}.
In \cite{MM2,MMU}, we introduce a decomposition of the nonlinearity 
\begin{equation}\label{eq:decomp}
	F(u) = g_0 |u|^{1+\frac2d} + g_1 |u|^{\frac2d}u + \sum_{n\neq 0,1} g_n |u|^{1+\frac2d-n}u^n
\end{equation}
with the coefficients
\begin{equation}\label{eq:gn}
	g_n = \frac1{2\pi} \int_0^{2\pi} F(e^{i\theta}) e^{-in\theta} d\theta
\end{equation}
and show if $g_0=0$ and $g_1 \in \R$ then the equation \eqref{eq:NLS} admits a solution 
which asymptotically behaves like
\begin{equation}\label{eq:masymptotic}
	u_{\mathrm{ap}} (t) = (2it)^{-\frac{d}{2}}  e^{i \frac{|x|^2}{4t}} \wha{u_{+}}\(\frac{x}{2t}\) \exp \( -i g_1 \left| \wha{u_{+}}\(\frac{x}{2t}\) \right|^{\frac{2}{d}} \log t \)
\end{equation}
as $t\to\I$ for suitable function ${u_+}$, under some summability assumption on $\{g_n\}_n$.
In particular, if $g_0=g_1=0$ then there exists an asymptotically free solution.

In this paper, we consider the case $g_0\neq 0$.
Remark that we may let $g_0=1$ without loss of generality by change of variable.
The behavior of the solutions is studied in some specific cases such as $d=2$ and $F(u)=2(\Re u)^2$ in \cite{HN06FE}.
However,
it seems difficult to predict typical behavior in a general setting
because even small data global existence is not always true \cite{IW} (see also \cite{II2, FO}).
Further, another critical notion of the power of the nonlinearity is reported in \cite{HN14}.
According to these facts,
we do not try to give a behavior in terms of $\{g_n\}_n$ in this paper,
but instead deny the existence of a solution that behaves like a free solution or a free solution with a logarithmic phase correction, that is, behaves like \eqref{eq:masymptotic}.
This is a complementary study of \cite{MM2,MMU}, and is an extension of \cite{Sh,STs}.

\subsection{Nonexistence of a modified scattering state}
To state the results, we introduce notations.
Set $\Jbr{a}=(1+|a|^2)^{1/2}$ for $a \in \C$ or $a\in \R^d$. For $s, m \in \R$,
the weighted Sobolev space on $\R^{d}$ is defined by $H^{m,s} = \{u \in \Sch'(\R^d)\;  ;\; \Jbr{i\n}^m \Jbr{x}^s u \in L^2(\R^d) \}$. 

We first give the definition of a solution.
\begin{definition}[Solution]\label{def:sol}
Let $I \subset \R$ be an interval. 
We say a function $u(t,x): I\times \R^d\to \C$ is a solution to \eqref{eq:NLS} on $I$ if $u(t)$ belongs to
\[
	C_t(I;L^2_x(\R^d)) \cap L^{\frac{2(d+2)}{d}}_{t,\mathrm{loc}}(I; L_x^{\frac{2(d+2)}{d}}(\R^d))
\]
and satisfies  
\[
	u(t_2) = U(t_2-t_1) u(t_1) - i \int_{t_1}^{t_2} U(t_2 -s) F(u(s)) ds
\]
in $L^2(\R^d)$ for any $t_1,t_2 \in I$, where $U(t)=e^{it\Delta}$ is the free Schr\"odinger group.
\end{definition}

For $t\in \R \setminus\{0\}$, we let
unitary operators $M(t)$ and $D(t)$ on $L^2(\R^d)$ by
\[
	[M(t)f](x) = e^{i\frac{|x|^2}{4t}} f(x),\quad
	[D(t)f](x) = (2t)^{-\frac{d}2} f\(\frac{x}{2t}\).
\]
% For $t\in \R$, we also let $E(t)=e^{it|x|^2}$ is a multiplication operator.
% Remark that we have the factorization formula
% \[
% 	U(t) = M(t) D(t) \F M(t) = e^{-i\frac{d\pi}{4}} D(t)E(t) \F M(t).
% \]
For a number $\lambda \in \R$ and a function $u_+ \in L^2(\R^d)$, we let
\begin{equation}\label{eq:Vl}
	V_\lambda(t)=V_\lambda(t,x;u_{+}) = e^{-i \frac{d\pi}4}(M(t) D(t) [\wha{u_+}\exp(-i\l |\wha{u_+}|^{2/d} \log t)])(x),
\end{equation}
where $\wha{u_+}$ denotes the Fourier transform $(2\pi)^{-\frac{d}2} \int_{\R^d} e^{-ix\cdot\xi} u_+(x) dx$.
Remark that $V_{g_1}(t)$ is the same asymptotic profile as in \eqref{eq:masymptotic}.

Our main result is the following.
\begin{theorem}[No scattering nor modified scattering] \label{thm:main}
Let $d\ge1$.
Suppose that $\{g_n\}_n \in \ell^1(\Z)$ and $g_0 =1$. 
If a solution $u(t)$ to \eqref{eq:NLS} on $[T,\I)$, $T\in \R$, satisfies
\begin{align}
\lim_{t \to \I} \Lebn{u(t)-V_\lambda(t)}{2} = 0 \label{asmp:1}, \\
\lim_{t \to \I} t^{\frac{d}{2(d+2)}} \norm{u(\cdot) - V_\lambda(\cdot)}_{L^{\frac{2(d+2)}{d}}_{t,x}([t, \I)\times \R^d)} = 0, \label{asmp:2}
\end{align}
for some $u_+ \in H^{0,\frac{d}{d+2}}(\R^d)$ and some $\lambda \in \R$, where $V_\lambda(t)$ is given in \eqref{eq:Vl},
then $u_{+} \equiv 0$.
\end{theorem}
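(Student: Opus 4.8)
The plan is to detect the non‑oscillating, gauge‑breaking term $g_0|u|^{1+2/d}$ of the decomposition \eqref{eq:decomp} by pairing $u$ against a \emph{slowly spreading} spatial average, and to show that this term drives a growth incompatible with the hypotheses. Fix a nonnegative radial Schwartz function $\theta$ with $\theta(0)=1$ and set
\[
  \zeta(t):=\int_{\R^d} u(t,x)\,\theta\big(\tfrac{x}{2t}\big)\,dx .
\]
First I would bound $\zeta$ from above. Splitting $u=V_\lambda+(u-V_\lambda)$ and using $\norm{\theta(\cdot/2t)}_{L^2}=(2t)^{d/2}\norm{\theta}_{L^2}$ together with \eqref{asmp:1}, the remainder is $\le\Lebn{u-V_\lambda}{2}(2t)^{d/2}\norm{\theta}_{L^2}=o(t^{d/2})$. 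For the main part, the definition \eqref{eq:Vl} gives
\[
  \int_{\R^d} V_\lambda(t,x)\,\theta\big(\tfrac{x}{2t}\big)\,dx
  = e^{-i\frac{d\pi}4}(2t)^{\frac d2}\int_{\R^d} e^{it|y|^2}\,\wha{u_+}(y)\,e^{-i\lambda|\wha{u_+}(y)|^{2/d}\log t}\,\theta(y)\,dy ,
\]
and a stationary‑phase (oscillatory integral) estimate, for which the regularity $u_+\in H^{0,\frac{d}{d+2}}$ is used, shows the integral is $o(1)$; the mild $\log t$ loss from differentiating the logarithmic phase is harmless. Hence $\zeta(t)=o(t^{d/2})$.

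Next I would compute $\frac{d}{dt}\zeta$ from \eqref{eq:NLS}:
\[
  \frac{d}{dt}\zeta(t)
  = \int (i\Delta u - iF(u))\,\theta\big(\tfrac{x}{2t}\big)\,dx
  -\frac1t\int u\,\big(y\!\cdot\!\nabla\theta\big)\big(\tfrac{x}{2t}\big)\,dx .
\]
The $\Delta$–term equals $i(2t)^{-2}\int u\,(\Delta\theta)(\cdot/2t)\,dx$, while the last term is $t^{-1}\int u\,G(\cdot/2t)\,dx$ with $G(y)=y\cdot\nabla\theta$; the same oscillatory‑integral bound as above makes each of them $o(t^{d/2-1})$, so both integrate to $o(t^{d/2})$. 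Everything therefore reduces to $-i\int F(u)\,\theta(\cdot/2s)\,dx$.

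To handle this term I would first replace $F(u)$ by $F(V_\lambda)$. The homogeneity of $F$ and $\{g_n\}\in\ell^1$ give $|F(u)-F(V_\lambda)|\lesssim (|u|+|V_\lambda|)^{2/d}|u-V_\lambda|$, and Hölder with the Strichartz exponent $r=\frac{2(d+2)}d$, together with the elementary scaling identity $\norm{V_\lambda(s)}_{L^r}=(2s)^{-\frac d{d+2}}\norm{\wha{u_+}}_{L^r}$ (and the comparable bound for $u$), yields
\[
  \Big|\int\big(F(u)-F(V_\lambda)\big)\theta(\cdot/2s)\,dx\Big|
  \lesssim \norm{u(s)-V_\lambda(s)}_{L^r_x}\,(2s)^{\frac d2-\frac 2{d+2}} .
\]
A dyadic decomposition in time — pairing the $L^r_{s,x}$ Strichartz norm against the weight by Hölder in $s$ and using precisely the factor $s^{\frac d{2(d+2)}}$ of \eqref{asmp:2} — then makes $\int_T^t$ of the left side equal to $o(t^{d/2})$. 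Finally, inserting \eqref{eq:decomp} into $\int F(V_\lambda)\theta(\cdot/2s)\,dx$ and changing variables $y=x/2s$: the prefactor $(2s)^{-(\frac d2+1)}$ comes out by homogeneity, the $n=0$ term produces the \emph{non‑oscillatory} contribution $g_0(2s)^{\frac d2-1}\int|\wha{u_+}(y)|^{1+2/d}\theta(y)\,dy$, while each $n\neq0$ term carries a factor $e^{ins|y|^2}$ and is $(2s)^{\frac d2-1}o(1)$ by the oscillatory‑integral estimate, the summation over $n$ being controlled by $\{g_n\}\in\ell^1$. Integrating in $s$, the $n=0$ term dominates, so that $|\zeta(t)|\gtrsim t^{d/2}$ whenever $\int|\wha{u_+}|^{1+2/d}\theta>0$. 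Since $g_0=1\neq0$, this contradicts $\zeta(t)=o(t^{d/2})$ unless $\int|\wha{u_+}|^{1+2/d}\theta=0$; letting $\theta$ exhaust $\R^d$ gives $\wha{u_+}\equiv0$, i.e.\ $u_+\equiv0$.

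The main obstacle is the difference estimate of the previous paragraph. Because $\theta(\cdot/2s)$ carries $L^2$–mass of order $s^{d/2}$, replacing $F(u)$ by $F(V_\lambda)$ inside a time integral is delicate: the error must be shown to be of strictly lower order than the resonant growth $t^{d/2}$, and this is exactly where the weighted Strichartz convergence \eqref{asmp:2} and the weighted regularity $u_+\in H^{0,\frac d{d+2}}$ enter in a calibrated way. A secondary technical point is the uniform oscillatory‑integral estimate, which must absorb the $t$–dependent logarithmic phase appearing in $V_\lambda$.
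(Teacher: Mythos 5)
Your overall skeleton is sound and is in fact parallel to the paper's: where the paper pairs the Duhamel identity over $[t,2t]$ against the profile $H(t)=-iD(t)G$ with $G$ built from $|\wha{u_+}|^{1+2/d}$, you pair the equation against the dilated bump $\theta(x/2t)$ and integrate in time; in both cases the point is that the $n=0$ term of \eqref{eq:decomp} carries no quadratic oscillation $e^{ins|y|^2}$ and produces a coercive contribution of size $t^{d/2}$ (resp.\ $\norm{G}_{L^2}^2$), while every $n\neq 0$ term and the linear terms must be shown to be lower order. Your dyadic-in-time Strichartz bookkeeping for replacing $F(u)$ by $F(V_\lambda)$ is correct (it mirrors the paper's estimate of $I_1(t)$ in Lemma \ref{lem:key1}), and the $\ell^1$ summation over $n$ by dominated convergence is also how the paper proceeds.

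The genuine gap is the step you invoke everywhere and dismiss as routine: the claim that $\int e^{ins|y|^2}\,e^{-in\lambda|\wha{u_+}(y)|^{2/d}\log(\sigma t)}\,F_n(\wha{u_+})(y)\,\theta(y)\,dy \to 0$ by ``stationary phase,'' with the remark that ``the mild $\log t$ loss from differentiating the logarithmic phase is harmless.'' This cannot be run as stated: $u_+\in H^{0,\frac{d}{d+2}}$ means only $\wha{u_+}\in H^{\frac{d}{d+2}}$ with $\frac{d}{d+2}<1$, so neither the amplitude $F_n(\wha{u_+})$ nor the phase correction $\lambda|\wha{u_+}|^{2/d}\log(\sigma t)$ is differentiable even once; classical stationary/non-stationary phase requires at least $C^1$ data, and a naive density argument fails because approximating $\wha{u_+}$ inside the exponential costs a factor $\log t$ (Lipschitz bound on $e^{i\cdot}$) that blows up as $t\to\I$ for any fixed approximant. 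This is precisely where the bulk of the paper's Section 3 lives: one must introduce the $t$-dependent frequency cutoff $\chi(t)=\psi(t^{-1/2}|\n|)$, integrate by parts only on the regularized piece $I_{3,n}$ (derivatives of $\chi(t)$ cost $t^{1/2}$, integration by parts gains $t^{-1}$, net gain $t^{-1/2}$), and control the rough piece $I_{4,n}$ via Mihlin's multiplier theorem, a fractional Leibniz rule, and the fractional chain rule for H\"older-continuous functions (Proposition \ref{prop:Visan}) so that the decay $t^{-\theta_0/2}$ beats the growth $(\log t)^{d\alpha_0/2}$ coming from the logarithmic phase. Your proposal labels this ``a secondary technical point,'' but it is the central technical content of the theorem; without supplying an argument of this kind (or an equivalent quantitative smoothing-versus-oscillation trade-off), the proof is incomplete at its decisive step — both in your upper bound $\zeta(t)=o(t^{d/2})$ and in discarding the $n\neq0$ terms.
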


\begin{remark}
As mentioned above, if $g_0\in \C \setminus \{0\}$ then we may let $g_0=1$ by change of variable.
\end{remark}

\begin{remark}
If $g(\theta)=F(e^{i\theta})$ is Lipschitz continuous, we can construct a unique local solution for any given $L^2$ data by a standard contraction argument.
Remark that our assumption $\{g_n\}_n \in \ell^1(\Z)$ is weaker than the Lipschitz continuity of $g(\theta)$. Indeed, $g(\theta)=|\cos \theta|^{1/2}$ is such an example.
The case corresponds to $F(u)=|\Re u|^\frac12 |u|^{\frac12+\frac2d}$, and $g_n=O(|n|^{-3/2})$ (see \cite{MMU,MS5}).
\end{remark}

\begin{remark}
When $\l=0$, the assumptions \eqref{asmp:1} and \eqref{asmp:2} are equivalent to
\begin{align}
	\lim_{t \to \I} \Lebn{u(t)-U(t)u_+}{2} &= 0, \label{asmp:3}\\
	\lim_{t \to \I} t^{\frac{d}{2(d+2)}} \norm{u(\cdot) - U(\cdot)u_+}_{L^{\frac{2(d+2)}{d}}_{t,x}([t, \I)\times \R^d)} &= 0,
	\label{asmp:4}
\end{align}
respectively, as long as  $u_+ \in H^{0,d/(d+2)}$ (see Lemma \ref{lem:equivalence}).
Hence, our theorem is a generalization of \cite{Sh,STs}.
\end{remark}
\begin{remark}
Not only the asymptotic profile of the form \eqref{eq:Vl} but also 
profiles with more general phase correction term can be treated (see 
Theorem \ref{thm:general}).
\end{remark}

Our argument is also applicable to the case $g_0=0$ and $g_1 \neq 0$.
We are able to prove the nonexistence of an asymptotic free solution.
\begin{theorem}[No scattering] \label{thm:Barab}
Let $d\ge1$.
Suppose that $\{g_n\}_n \in \ell^1(\Z)$, $g_0 =0$, and $g_1\neq 0$. 
If a solution $u(t)$ to \eqref{eq:NLS} on $[T,\I)$, $T\in \R$, satisfies
\eqref{asmp:3} and \eqref{asmp:4}
% \begin{align}
% \lim_{t \to \I} \Lebn{u(t)-U_\lambda(t)u_{+}}{2} = 0 \label{asmp:1}, \\
% \lim_{t \to \I} t^{\frac{d}{2(d+2)}} \norm{u(\cdot) - U_\lambda(\cdot)u_+}_{L^{\frac{2(d+2)}{d}}([t, \I), L^{\frac{2(d+2)}{d}})} = 0, \label{asmp:2}
% \end{align}
for some $u_+ \in H^{0,\frac{d}{d+2}}(\R^d)$,
then $u_{+} \equiv 0$.
\end{theorem}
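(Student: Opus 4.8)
The plan is to test the Duhamel formula against the candidate asymptotic state $u_+$ and to isolate the gauge-invariant (``resonant'') term $g_1|u|^{2/d}u$: once $u$ is replaced by the free profile $U(t)u_+$, this term produces a factor $1/t$ whose time-integral diverges logarithmically, while every other contribution will be shown to be integrable in time. The only way to avoid a contradiction will then be $u_+\equiv0$. Concretely, set $\Psi(t)=\langle U(-t)u(t),u_+\rangle_{L^2}=\langle u(t),U(t)u_+\rangle_{L^2}$. From the integral identity in Definition \ref{def:sol} one has $U(-t)u(t)=U(-t_1)u(t_1)-i\int_{t_1}^{t}U(-s)F(u(s))\,ds$, hence
\[
\Psi(t)=\Psi(t_1)-i\int_{t_1}^{t}\langle F(u(s)),U(s)u_+\rangle\,ds .
\]
By \eqref{asmp:3} and unitarity, $\Psi(t)\to\Lebn{u_+}{2}^2$ as $t\to\I$, so the improper integral $\int_{t_1}^{\I}\langle F(u(s)),U(s)u_+\rangle\,ds$ converges. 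First I would split $\langle F(u),U(s)u_+\rangle=\langle F(U(s)u_+),U(s)u_+\rangle+\langle F(u)-F(U(s)u_+),U(s)u_+\rangle$ and treat the two pieces separately.

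The difference term is where \eqref{asmp:4} enters, and it needs only to be shown \emph{finite} (not small). Bounding the whole time-integral by H\"older in space--time with the critical Strichartz exponent $q=2(d+2)/d$ (and its dual $q'$) gives
\[
\int_{t_1}^{\I}\bigl|\langle F(u)-F(U(\cdot)u_+),U(\cdot)u_+\rangle\bigr|\,ds\lesssim \norm{F(u)-F(U(\cdot)u_+)}_{L^{q'}_{s,x}}\,\norm{U(\cdot)u_+}_{L^{q}_{s,x}} ,
\]
where $\norm{U(\cdot)u_+}_{L^q_{s,x}(\R\times\R^d)}<\I$ by the global Strichartz estimate for $u_+\in L^2$, and the difference of nonlinearities is controlled by $\norm{u-U(\cdot)u_+}_{L^q_{s,x}([t_1,\I))}$ (finite, indeed small for $t_1$ large, by \eqref{asmp:4}) times the finite $L^q$ norms of $u$ and $U(\cdot)u_+$.

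For the main term I would use the factorization $U(s)=M(s)D(s)\mathcal{F}M(s)$ together with the decomposition \eqref{eq:decomp}. The $n=1$ contribution to $\langle F(U(s)u_+),U(s)u_+\rangle$ is exactly $g_1\int_{\R^d}|U(s)u_+|^{2+2/d}\,dx$, since the phases cancel identically; replacing $U(s)u_+$ by its profile (phase)$\cdot(2s)^{-d/2}\wha{u_+}(x/2s)$ and substituting $y=x/2s$ yields
\[
g_1\int_{\R^d}|U(s)u_+|^{2+\frac2d}\,dx=\frac{g_1}{2s}\,\Lebn{\wha{u_+}}{2+2/d}^{2+2/d}+(\text{integrable in }s),
\]
and $\wha{u_+}\in L^{2+2/d}$ because $u_+\in H^{0,d/(d+2)}$ gives $\wha{u_+}\in H^{d/(d+2)}\hookrightarrow L^{2(d+2)/d}$, hence $\wha{u_+}\in L^2\cap L^{2(d+2)/d}\subset L^{2+2/d}$. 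For each non-resonant index $n\neq1$ (there is no $n=0$ term since $g_0=0$) the surviving phase is $e^{i(n-1)|x|^2/4s}=e^{i(n-1)s|y|^2}$, so the corresponding integral is oscillatory in $s$; using the weighted regularity of $u_+$ one shows these terms, summed against $\{g_n\}\in\ell^1$, are integrable over $[t_1,\I)$. The profile-replacement (i.e.\ $M(s)-1$) corrections are integrable for the same reason, via $\Lebn{(M(s)-1)u_+}{2}\lesssim s^{-d/(2(d+2))}\Lebn{|x|^{d/(d+2)}u_+}{2}$, which is precisely why $H^{0,d/(d+2)}$ is the natural space.

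Combining the three parts, convergence of $\int_{t_1}^{\I}\langle F(u(s)),U(s)u_+\rangle\,ds$ forces $\int_{t_1}^{\I}\frac{g_1}{2s}\Lebn{\wha{u_+}}{2+2/d}^{2+2/d}\,ds$ to converge; since $g_1\neq0$ and $\int^{\I}ds/s=\I$, this requires $\Lebn{\wha{u_+}}{2+2/d}=0$, i.e.\ $u_+\equiv0$. I expect the main obstacle to be the error analysis of the third paragraph under the weak hypothesis $\{g_n\}\in\ell^1$: because $F$ need not be Lipschitz, the naive difference estimate $|F(z)-F(w)|\lesssim(|z|^{2/d}+|w|^{2/d})|z-w|$ is unavailable and must be replaced by a term-by-term H\"older-type bound summed through the $\ell^1$ norm of $\{g_n\}$, while the oscillatory control of the non-resonant sum must be uniform enough to justify interchanging the summation over $n$ with the time-integration.
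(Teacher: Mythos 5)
Your overall strategy --- pairing the Duhamel formula against $u_+$, isolating the resonant term $g_1|u|^{2/d}u$ that produces a factor $1/s$, and showing everything else is negligible --- is the same as the paper's (compare \eqref{bara:1}--\eqref{bara:3}). The gap is in the execution: you integrate over the whole half-line $[t_1,\I)$, so you need every error term to be \emph{integrable in time}, and the hypotheses \eqref{asmp:3}--\eqref{asmp:4} do not supply that. The concrete failure is in your second paragraph: with $q=\frac{2(d+2)}{d}$ and $q'=\frac{2(d+2)}{d+4}$, the nonlinearity has degree $1+\frac{2}{d}$, so bounding $\norm{F(u)-F(U(\cdot)u_+)}_{L^{q'}_{s,x}}$ by products of $L^{q}_{s,x}$ norms requires $\frac{1+2/d}{q}=\frac{1}{q'}$; but $\frac{1+2/d}{q}=\frac12$ while $\frac{1}{q'}=\frac{d+4}{2(d+2)}$, a deficit of $\frac{1}{d+2}$. (The exponents would match for the Strichartz-critical power $1+\frac4d$, not for the scattering-critical power $1+\frac2d$.) Over an infinite time interval this deficit cannot be absorbed, since $L^{2}_{s,x}\not\hookrightarrow L^{q'}_{s,x}$ on a set of infinite measure; this is exactly why \eqref{asmp:4} carries the weight $t^{\frac{d}{2(d+2)}}$ --- it is calibrated so that H\"older in time over a \emph{finite} block $(t,2t)$ produces the compensating power of $t$ (see the estimate of $I_1(t)$ in the proof of Lemma \ref{lem:key1}). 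What the hypotheses actually yield is that each error term is $o(1)$ on each dyadic block $[t,2t]$; smallness per block is strictly weaker than integrability (an integrand of size $o(1/s)$ need not be integrable), so the intended contradiction ``convergent integral $=$ divergent $\int ds/s$ plus finite errors'' does not close. The same issue affects your non-resonant terms: the oscillatory pairings tend to zero on each block by integration by parts plus a density argument, with no rate, and upgrading this to a decay rate summable against $\{g_n\}\in\ell^1$ runs into the obstacle you yourself flag, namely that the fractional-chain-rule constants grow in $n$ (the paper sidesteps summability entirely via a uniform-in-$n$ bound plus dominated convergence in $n$, which again yields only per-block smallness).

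The repair is short and turns your argument into the paper's proof. From the convergence of the improper integral you established, the Cauchy criterion gives $\int_t^{2t}\langle F(u(s)),U(s)u_+\rangle\,ds\to0$ as $t\to\I$. Now run your third paragraph on the block $[t,2t]$ only: the resonant part converges to $\frac{\log 2}{2}\,g_1\Lebn{\wha{u_+}}{\frac{2(d+1)}{d}}^{\frac{2(d+1)}{d}}$, while the profile-replacement errors and the non-resonant oscillatory terms need only tend to zero --- not be integrable --- which the block estimates do provide. Since $g_1\neq0$, this forces $\wha{u_+}\equiv 0$. That is precisely the content of \eqref{bara:2} and \eqref{bara:3} in Section 4.
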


\begin{remark}
This result can be compared with that by Strauss \cite{St} (see also Barab \cite{B}).
In \cite{St,B}, the case $g_n=\pm \delta_{n1}$ is treated.
The assumptions \eqref{asmp:2} and $u_+ \in H^{0,\frac{d}{d+2}}(\R^d)$ are not used.
However, their argument requires the assumption $\norm{U(t) u_+}_{L^\I} = O(t^{-d/2})$ as $t\to\I$.
It is not clear which assumption is stronger.
\end{remark}

\subsection{Finite time blowup}
As mentioned above, when $g_0=1$ we may not expect even global existence for small data.
By the test function method introduced by \cite{QZ1, QZ2}, we obtain the following blowup result as long as $|u|^{1+2/d}$ is dominant. 

To state the result, we introduce notion of a weak solution. 
\begin{definition}[weak solution]\label{def:wsol}
Suppose that $F(z)$ is locally uniformly bounded. 
We say a function $u(t,x) \in \mathcal{S}'((-\I,T)\times \R^d)$
is a weak solution to \eqref{eq:NLS} with initial condition $u(0,x) = u_0(x) \in L^1_{\mathrm{loc}}(\R^d)$ on $[0,T)$,
$T>0$, if $u \in L^{{(d+2)}/{d}}_{\mathrm{loc}} ( (0, T) \times \R^d)$
 and the identity
\begin{multline*}
	\int_{(0,T) \times \R^d} u(t,x)(-i\d_t \psi(t,x) + \Delta\psi(t,x)) dxdt\\
	= i\int_{\R^d} u_0(x) \psi(0,x)  dx + \int_{(0,T) \times \R^d} F(u(t,x))\psi(t,x) dxdt
\end{multline*}
holds for any test function $\psi \in C_0^\I ((-\I,T) \times \R^d)$.
\end{definition}
Note that a solution (in the sense of Definition \ref{def:sol})
on $(-\tau,T)$, $\tau>0$, is a weak solution on $[0,T)$
by introducing a suitable extension of $u$ in $(-\I,-\tau/2)\times\R^d$.

For a given data $u_0\in L^1_{\mathrm{loc}}(\R^d)$, we define the maximal existence time by
\begin{align*}
 	T_{\max} =
	T_{\max}(u_0)
	 := \sup \left\{T >0\; ;\;
	\begin{aligned}
	&\text{There exists a weak solution $u(t)$}\\
	&\text{to \eqref{eq:NLS} with $u(0) = u_0$ on $[0, T)$}
	\end{aligned}
	\right\}.
\end{align*}
%$\e>0$ and $f \in L^{1}_{loc}(\R^d)$.
\begin{theorem}[Finite time blowup]\label{thm:FB}
Let $d\ge1$ and $\e >0$. 
Suppose that $\{g_n\}_n \in \ell^1(\Z)$ satisfies $g_0=1$ and
$\mu := g_0- \sum_{n\neq0} |g_n|>0$.
If $f \in L^{1}_{\mathrm{loc}}(\R^d)$ satisfy 
\begin{align}
	-\Im f(x) \ge 
	\left\{
	\begin{aligned}
	&|x|^{-k} && |x| >R_0, \\
	&0 && |x| \le R_0,
	\end{aligned}
	\right.
	\label{assmp:ii}
\end{align}
for some $k\le d$ and $R_0>0$, then there exist $C=C(k,R_0,\mu)>0$ and $\e_0 >0$ such that
\begin{align}
	T_{\max}(\eps f) \le 
	\left\{
	\begin{aligned}
	&C \e^{-\frac{2}{d-k}} && k<d, \\
	&\exp (C/\eps) && k=d
	\end{aligned}
	\right.
	\label{thm:ii1}
\end{align}
holds for any $\e \in (0,\e_0)$. %, where $\kappa = \frac{k-d}{2}$.
\end{theorem}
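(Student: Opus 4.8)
The plan is to run the rescaled test-function method of \cite{QZ1,QZ2}, the point being that the non-gauge-invariant coefficient $g_0=1$ forces the real part of the nonlinearity to be coercive. Writing $u=re^{i\theta}$ with $r=\abs{u}$ and using \eqref{eq:decomp}, every summand satisfies $\abs{u}^{1+2/d-n}u^n=r^{1+2/d}e^{in\theta}$, so $F(u)=r^{1+2/d}\sum_n g_ne^{in\theta}$ and hence
\[
\Re F(u)=\abs{u}^{1+\frac2d}\,\Re\Big(g_0+\sum_{n\neq0}g_ne^{in\theta}\Big)\ge\Big(g_0-\sum_{n\neq0}\abs{g_n}\Big)\abs{u}^{1+\frac2d}=\mu\,\abs{u}^{1+\frac2d}.
\]
The absolute convergence of the series and the lower bound both use $\{g_n\}_n\in\ell^1(\Z)$, and $\mu>0$ is exactly the hypothesis of the theorem. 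This pointwise coercivity, paired with the sign condition \eqref{assmp:ii} on $-\Im(\eps f)$, is the whole mechanism.

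First I would fix a nonnegative real test function $\psi\in C_0^\infty((-\I,T)\times\R^d)$ and take the real part of the weak identity in Definition \ref{def:wsol} with $u_0=\eps f$. Writing $u=u_1+iu_2$ and $F_1=\Re F(u)$, this produces
\[
\int F_1(u)\,\psi=\int u_1\,\Delta\psi+\int u_2\,\d_t\psi+\eps\int\Im f\,\psi(0,\cdot).
\]
The decisive sign is that, for $\psi(0,\cdot)\ge0$, the last term obeys $\eps\int\Im f\,\psi(0,\cdot)\le-\eps\int_{\abs{x}>R_0}\abs{x}^{-k}\psi(0,\cdot)$; combining with $F_1\ge\mu\abs{u}^{1+2/d}$ gives
\[
\mu\int\abs{u}^{1+\frac2d}\psi+\eps\int_{\abs{x}>R_0}\abs{x}^{-k}\,\psi(0,\cdot)\,dx\le\int\abs{u}\,\big(\abs{\Delta\psi}+\abs{\d_t\psi}\big).
\]

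Next I would bound the right-hand side by H\"older with the conjugate exponents $p=1+2/d$ and $p'=(d+2)/2$. Setting $A=\int\abs{u}^{p}\psi$ (finite because $u\in L^{(d+2)/d}_{\mathrm{loc}}=L^{p}_{\mathrm{loc}}$),
\[
\int\abs{u}\big(\abs{\Delta\psi}+\abs{\d_t\psi}\big)\le A^{1/p}\Big(\int\psi^{-p'/p}\big(\abs{\Delta\psi}^{p'}+\abs{\d_t\psi}^{p'}\big)\Big)^{1/p'}=:A^{1/p}K(\psi),
\]
and a Young inequality absorbs $A^{1/p}K(\psi)$ into $\tfrac\mu2 A$, leaving
\[
\eps\int_{\abs{x}>R_0}\abs{x}^{-k}\,\psi(0,\cdot)\,dx\le C\,K(\psi)^{p'}.
\]
I would then insert $\psi(t,x)=\eta(t/R^2)^\ell\,\Phi(x/R)^\ell$ with fixed cutoffs $\eta,\Phi\ge0$, $\eta(0)=1$, and $\ell$ large enough that the degenerate weight $\psi^{-p'/p}$ stays integrable up to $\p(\supp\psi)$. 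The parabolic scaling $x\sim R$, $t\sim R^2$ is dictated by $i\d_t+\Delta$: both $\abs{\Delta\psi}$ and $\abs{\d_t\psi}$ are $\sim R^{-2}$ on a set of measure $\sim R^{d+2}$, and since $2p'=d+2$ the power of $R$ in $K(\psi)^{p'}$ cancels, so $K(\psi)^{p'}=O(1)$ uniformly in $R$---this is precisely the criticality $p=1+2/d$. On the left, $\psi(0,\cdot)\sim1$ on $\abs{x}\lesssim R$ yields $\int_{\abs{x}>R_0}\abs{x}^{-k}\psi(0,\cdot)\gtrsim R^{d-k}$ when $k<d$ and $\gtrsim\log R$ when $k=d$. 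Since a weak solution on $[0,T)$ lets me take $R\sim\sqrt{T}$, the bound $\eps\cdot(\text{this})\le O(1)$ becomes $\eps T^{(d-k)/2}\lesssim1$, i.e.\ $T\lesssim\eps^{-2/(d-k)}$, for $k<d$, and $\eps\log T\lesssim1$, i.e.\ $T\lesssim e^{C/\eps}$, for $k=d$, which are exactly the bounds \eqref{thm:ii1}.

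The hard part will be the scaling bookkeeping in the previous step: choosing $\ell$ so that $\psi^{-p'/p}(\abs{\Delta\psi}^{p'}+\abs{\d_t\psi}^{p'})$ is integrable while confirming that $K(\psi)^{p'}$ is genuinely $R$-independent, which rests entirely on the exact critical identity $2p'=d+2$. Everything else---the coercivity of $\Re F$, the H\"older--Young absorption, and the conversion of the resulting a priori bound into an upper estimate for $T_{\max}(\eps f)$---is routine once these rescaled test-function estimates are secured.
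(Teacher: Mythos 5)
Your proposal is correct and takes essentially the same route as the paper: the rescaled test-function method of Zhang, with the coercivity $\Re F(u)\ge \mu |u|^{1+\frac2d}$ forced by $g_0=1$ and $\{g_n\}_n\in\ell^1(\Z)$, a H\"older-plus-absorption step, and the critical cancellation $2p'=d+2$ rendering the test-function error term $R$-independent, followed by the evaluation $R^{d-k}$ (resp.\ $\log R$) of the weighted initial-data integral. The only cosmetic differences are your compactly supported cutoffs raised to a large power versus the paper's weights $\varphi(x)=\exp(1-\sqrt{1+|ax|^2})$ and $\eta(t)=(1-t)_+^{\theta}$ satisfying $|\Delta\varphi|\le M\varphi$ and $|\partial_t\eta|\le N\eta^{d/(d+2)}$ (which require a density argument since $\varphi$ is not compactly supported), and your use of Young's inequality in place of the paper's elementary bound $\sup_{x\ge 0}\bigl(Cx^{d/(d+2)}-\mu x\bigr)<\infty$.
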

\begin{remark}
Let us emphasize that uniqueness of a weak solution is not assumed in Theorem \ref{thm:FB}.
The estimate \eqref{thm:ii1} implies that any existence interval of a weak solution obeys the estimate.
\end{remark}
\begin{remark}
In addition to the assumption of the theorem, we suppose $F(e^{i\theta})$ is Lipschitz continuous and $f\in L^2(\R^d)$.
Then, a standard contraction argument yields a unique solution $u(t)$ in the sense of Definition \ref{def:sol}.
Let $I_{\max}$ be a maximal existence interval of the solution.
Then, $T_{\max}:=\sup I_{\max}$ coincides with the above one and
$u(t)$ blows up at $t=T_{\max}$ in such a sense that
$\lim_{t \to T_{\max}-0} \Lebn{u(t)}{2} = \I$.
\end{remark}

The rest of the paper is as follows.
In Section 2, we give an outline of the proof of Theorem \ref{thm:main} and
extract main technical parts of the proof.
Section 3 is devoted to the main parts.
Then, in Section 4, we turn to the proof of Theorem \ref{thm:Barab}.
Finally, Theorem \ref{thm:FB} is discussed in Section 5.

%%%%%%%%%%%%%%%%%%%%%%%%%%%%%%%%%%%%%%%%%%%%%%%

\section{Outline of the proof of Theorem \ref{thm:main}}
Let $u(t)$ be a solution on $[T,\I)$.
By the equation, we have
\begin{equation}\label{eq:int}
	U(-2t)u(2t) -U(-t)u(t) = -i\sum_{n\in \Z} g_n \int_t^{2t} U(-s) F_n(u(s)) ds
\end{equation}
in $L^2(\R^d)$ for $t>T$, where
\[
	F_n(u) = |u|^{1+\frac2d-n}u^n.
\]
Note that the right hand side makes sense as a $L^2(\R^d)$ function
by means of
(dual) Strichartz's estimate and the assumption $\{g_n\}_n \in \ell^1(\Z)$.

Our proof is in the same spirit as in the paper by Shimomura and Tsutsumi \cite{STs}.
We briefly recall the argument.
Their case corresponds to $g_n=\delta_{n0}$. Hence, \eqref{eq:int} is reduced to 
\begin{equation*}
	U(-2t)u(2t) -U(-t)u(t) = -i \int_t^{2t} U(-s) |u|^{1+\frac2d}(s) ds.
\end{equation*}
Suppose \eqref{asmp:3} and \eqref{asmp:4} hold with some $u_+ \not \equiv 0$. %, which are equivalent to \eqref{asmp:1} and \eqref{asmp:2} with $\l=0$.
Then, the left hand side 
converges to zero strongly in $L^2(\R^d)$ as $t\to\I$ by means of the assumption \eqref{asmp:3},
while the assumption \eqref{asmp:2} with $\l=0$, which is equivalent to \eqref{asmp:4}, implies
\[
	\norm{\int_t^{2t} U(-s) |u(s)|^{1+\frac2d} ds}_{L^2} = \norm{\int_1^{2} \frac{1}{(2\sigma)^{1+\frac{d}2}}\left|\wha{u_+}\(\frac{x}{2\sigma}\)\right|^{1+\frac2d} d\sigma}_{L^2} + o(1)
\]
as $t\to\I$. Hence, we obtain a contradiction.

Let us go back to our case.
Since the constant $\l \in \R$ in the assumption \eqref{asmp:2} is not necessarily zero,
the left hand side of \eqref{eq:int} does not necessarily converges to zero strongly in $L^2(\R^d)$ as $t\to\I$.
Furthermore, in the general $\{g_n\}_n$ case, it is not easy to estimate the norm of the right hand side 
of \eqref{eq:int} in $L^2(\R^d)$.

The idea here is to %work with \emph{weak convergence}.
%We 
look at structure of every term in the both sides of \eqref{eq:int} and derive a contradiction by
considering a pairing with a suitable function.
More precisely, let
\begin{align} \label{eq:HG}
	\begin{aligned} 
	H(t,x) :={}& -i D(t)G(x) \\
	:={}& -i D(t)\left[\int_1^{2} \frac{1}{(2\sigma)^{1+\frac{d}2}}\left|\wha{u_+}\(\frac{\cdot}{2\sigma}\)\right|^{1+\frac2d} d\sigma\right](x).
	\end{aligned}
\end{align}
Then, we have
\begin{align*}
	&\(-i\int_t^{2t} U(-s) F_0(u(s)) ds, H(t)\)_{L^2}\\
	&{}= (U(-2t)u(2t),H(t))_{L^2} - (U(-t)u(t),H(t))_{L^2}\\
	&\quad{}+ i\sum_{n\neq 0} g_n \( \int_t^{2t} U(-s) F_n(u(s)) ds , H(t)\)_{L^2},
\end{align*}
where $(f,g)_{L^2}=\int_{\R^d} f(x) \overline{g(x)}dx$ is the $L^2$ inner product.
The following three lemmas yield a contradiction if $u_+\not\equiv0$.
\begin{lemma}\label{lem:key1}
Suppose that \eqref{asmp:2} holds for some $u_+ \in H^{0,\frac{d}{d+2}}$ and some $\l\in \R$. Then,
\[
	\lim_{t\to\I}\(-i\int_t^{2t} U(-s) F_0(u(s)) ds, H(t)\)_{L^2}\\
	= \norm{G}_{L^2}^2.
\]
\end{lemma}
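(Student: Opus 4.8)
The plan is to split the nonlinear term as $F_0(u(s)) = F_0(V_\lambda(s)) + [F_0(u(s)) - F_0(V_\lambda(s))]$, to show that the first (principal) part reproduces $-iD(t)G$ in $L^2$, and to show that the second part is negligible when paired against $H(t) = -iD(t)G$. Pairing the principal part with $H(t)$ and using that $D(t)$ is unitary then gives $\|G\|_{L^2}^2$.

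The structural observation driving the principal part is that $F_0(u) = |u|^{1+2/d}$ depends only on $|u|$, so $F_0(V_\lambda(s))$ is independent of the logarithmic phase $\lambda$. From the definitions of $M(s)$ and $D(s)$ one has $|V_\lambda(s)| = (2s)^{-d/2}|\widehat{u_+}(\cdot/(2s))|$, hence $F_0(V_\lambda(s)) = (2s)^{-1}D(s)P$ with the fixed profile $P := |\widehat{u_+}|^{1+2/d}$, which carries no oscillating factor. Note that $P \in L^2$: the hypothesis $u_+ \in H^{0,d/(d+2)}$ gives $\widehat{u_+} \in H^{d/(d+2)}(\R^d) \hookrightarrow L^{2(d+2)/d}(\R^d)$ by the endpoint Sobolev embedding, and $\|P\|_{L^2}^2 = \|\widehat{u_+}\|_{L^{2(d+2)/d}}^{2(d+2)/d}$.

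Next I would show that the free propagator acts trivially on this slowly varying profile. The key is the dilation commutation identity
\[
	D(s)^{-1}U(-s)D(s) = U\!\left(-\tfrac{1}{4s}\right),
\]
which gives $U(-s)\big[(2s)^{-1}D(s)P\big] = (2s)^{-1}D(s)\,U(-1/(4s))P$. Since $U(-1/(4s))P \to P$ in $L^2$ by strong continuity of the group (uniformly for $s \ge t$ as $t\to\infty$) and $\int_t^{2t}(2s)^{-1}\,ds = \tfrac12\log 2$ is bounded, the difference between $\int_t^{2t}U(-s)F_0(V_\lambda(s))\,ds$ and $\int_t^{2t}(2s)^{-1}D(s)P\,ds$ tends to $0$ in $L^2$. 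The change of variables $s = t\sigma$ then identifies the latter integral with $D(t)G$, where $G$ is as in \eqref{eq:HG}. Thus $-i\int_t^{2t}U(-s)F_0(V_\lambda(s))\,ds - H(t) \to 0$ in $L^2$, and pairing with $H(t) = -iD(t)G$ produces $\|D(t)G\|_{L^2}^2 = \|G\|_{L^2}^2$.

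The remaining and hardest step is to estimate the remainder $-i\int_t^{2t}U(-s)[F_0(u(s)) - F_0(V_\lambda(s))]\,ds$ in the pairing. Using the homogeneity bound $|F_0(a) - F_0(b)| \lesssim (|a|^{2/d}+|b|^{2/d})|a-b|$, this reduces to controlling a product of $|u - V_\lambda|$ with powers of $|u|$ and $|V_\lambda|$. The essential difficulty is that the nonlinearity has the \emph{long-range} critical degree $1+2/d$, strictly below the mass-critical power $1+4/d$, so a plain (dual) Strichartz estimate does not close: a direct H\"older split forces the Lebesgue exponent $2$ rather than the dual-admissible $\tfrac{2(d+2)}{d+4}$. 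This is exactly where the sharp time weight in \eqref{asmp:2} is used. On each slab $[t,2t]$ one computes $\|V_\lambda\|_{L^{2(d+2)/d}_{s,x}} \sim t^{-d/(2(d+2))}$, which matches the weight $t^{d/(2(d+2))}$, so \eqref{asmp:2} forces $u - V_\lambda$ to be $o\big(t^{-d/(2(d+2))}\big)$ in the critical Strichartz norm; combining this with the uniform $L^2$ control from \eqref{asmp:1} via H\"older and the dispersive/Strichartz estimates should make the remainder vanish as $t\to\infty$. I expect this remainder estimate, rather than the explicit evaluation of the principal part, to be the crux of the lemma.
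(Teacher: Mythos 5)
Your handling of the principal part is correct and coincides with the paper's own argument: writing $F_0(V_\l(s)) = (2s)^{-1}D(s)P$ with $P = |\wha{u_+}|^{1+2/d} \in L^2$ (via $u_+ \in H^{0,d/(d+2)}$ and Sobolev embedding), commuting $U(-s)D(s) = D(s)U(-1/(4s))$, and invoking strong continuity of the group is exactly how the paper disposes of $I_2(t) := -i\int_t^{2t}(U(-s)-1)F_0(V_\l(s))\,ds$, after which $-i\int_t^{2t}F_0(V_\l(s))\,ds$ is identified with $H(t)$ and the pairing contributes $\norm{H(t)}_{L^2}^2 = \norm{G}_{L^2}^2$.

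The gap is the remainder term, which you yourself call the crux and then do not prove. Concretely: (i) you invoke \eqref{asmp:1}, which is not a hypothesis of this lemma (only \eqref{asmp:2} is assumed), so a proof routed through it would not establish the statement as written; (ii) the difficulty you describe --- that a dual Strichartz estimate fails to close below the mass-critical power --- is not an obstruction here, because no Strichartz or dispersive estimate is needed at all. Since $U(-s)$ is unitary and the integration is over the compact slab $[t,2t]$, Minkowski's inequality and H\"older in space (after absorbing $|u|^{2/d}\lesssim |u-V_\l|^{2/d}+|V_\l|^{2/d}$ in your pointwise bound) give
\[
	\Lebn{I_1(t)}{2} \le \int_t^{2t}\Lebn{F_0(u(s))-F_0(V_\l(s))}{2}\,ds
	\lesssim \int_t^{2t}\left(\Lebn{u(s)-V_\l(s)}{\frac{2(d+2)}{d}}^{1+\frac{2}{d}} + \Lebn{V_\l(s)}{\frac{2(d+2)}{d}}^{\frac{2}{d}}\Lebn{u(s)-V_\l(s)}{\frac{2(d+2)}{d}}\right)ds,
\]
and then H\"older in time over $(t,2t)$, together with $\Lebn{V_\l(s)}{\frac{2(d+2)}{d}} \lesssim s^{-\frac{d}{d+2}}\Lebn{\wha{u_+}}{\frac{2(d+2)}{d}}$, bounds the right-hand side by
\[
	\left( t^{\frac{d}{2(d+2)}}\norm{u-V_\l}_{L^{\frac{2(d+2)}{d}}_{t,x}((t,2t)\times\R^d)}\right)^{1+\frac2d}
	+ \Lebn{\wha{u_+}}{\frac{2(d+2)}{d}}^{\frac2d}\, t^{\frac{d}{2(d+2)}}\norm{u-V_\l}_{L^{\frac{2(d+2)}{d}}_{t,x}((t,2t)\times\R^d)},
\]
which tends to zero by \eqref{asmp:2} alone. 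This is precisely the scaling match you observed (the slab norm of $V_\l$ compensates the weight $t^{\frac{d}{2(d+2)}}$); you only needed to run these two H\"older inequalities instead of reaching for \eqref{asmp:1} or dispersive bounds. Note also that the remainder then vanishes in $L^2$ norm, not merely in the pairing, and that it is in fact the easy half of the paper's proof: the genuinely delicate (stationary-phase) work in the paper occurs in Lemmas \ref{lem:key2} and \ref{lem:key3}, not here.
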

\begin{lemma}\label{lem:key2}
Suppose that \eqref{asmp:1} holds for some $u_+ \in H^{0,\frac{d}{d+2}}$ and some $\l\in \R$. Then,
$\lim_{t\to0} (U(-\sigma t)u(\sigma t),H(t))_{L^2} = 0$
for $\sigma=1,2$.
\end{lemma}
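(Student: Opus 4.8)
The plan is to establish the result through a strong--weak duality: I will show that the first factor $U(-\sigma t)u(\sigma t)$ converges \emph{strongly} in $L^2(\R^d)$ as $t\to0$, while $H(t)=-iD(t)G$ converges \emph{weakly} to $0$ in $L^2(\R^d)$, so that the pairing converges to the pairing of the limits, which vanishes. Throughout, $\sigma\in\{1,2\}$ is fixed and the limit $t\to0$ is taken so that $\sigma t$ stays in the interval of existence.

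\emph{Strong factor.} By Definition \ref{def:sol} we have $u\in C_t(I;L^2_x(\R^d))$, so $u(\sigma t)\to u(0)$ in $L^2$ as $t\to0$. Combined with the strong continuity of the Schr\"odinger group, $U(-\sigma t)\to\mathrm{Id}$ strongly, the splitting
\[
	U(-\sigma t)u(\sigma t)=U(-\sigma t)\bigl(u(\sigma t)-u(0)\bigr)+U(-\sigma t)u(0)
\]
yields $U(-\sigma t)u(\sigma t)\to u(0)$ strongly in $L^2$: the first summand is bounded in norm by $\Lebn{u(\sigma t)-u(0)}{2}\to0$, and the second converges by strong continuity.

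\emph{Weak factor.} Since $D(t)$ is unitary, $\Lebn{D(t)G}{2}=\Lebn{G}{2}$ is bounded uniformly in $t$, so it suffices to check $(D(t)G,\phi)_{L^2}\to0$ on a dense subclass. The mechanism is the dispersive scaling $\Lebn{D(t)G}{p}=(2t)^{\frac{d}{p}-\frac{d}{2}}\Lebn{G}{p}$, whose exponent is positive for $p<2$. To exploit it I must verify that $G$ lies in some $L^{p_0}$ with $p_0<2$. From $u_+\in H^{0,\frac{d}{d+2}}$ one has $\wha{u_+}\in H^{\frac{d}{d+2}}(\R^d)$, hence by Sobolev embedding $\wha{u_+}\in L^2\cap L^{\frac{2(d+2)}{d}}$; therefore $|\wha{u_+}|^{1+\frac2d}\in L^2\cap L^{\frac{2d}{d+2}}$, and since $G$ is a finite dilation-average of $|\wha{u_+}(\cdot/2\sigma)|^{1+2/d}$ (see \eqref{eq:HG}) the same memberships pass to $G$. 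Taking $p_0=\frac{2d}{d+2}<2$, for which $\frac{d}{p_0}-\frac d2=1$, gives $\Lebn{D(t)G}{p_0}=2t\,\Lebn{G}{p_0}\to0$, so by H\"older $\abs{(D(t)G,\phi)_{L^2}}\le\Lebn{D(t)G}{p_0}\Lebn{\phi}{p_0'}\to0$ for every $\phi\in L^2\cap L^{p_0'}$. As this class is dense and $\Lebn{D(t)G}{2}$ is bounded, $D(t)G\rightharpoonup0$ weakly in $L^2$ as $t\to0$.

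\emph{Conclusion.} Writing $U(-\sigma t)u(\sigma t)=u(0)+r(t)$ with $\Lebn{r(t)}{2}\to0$,
\[
	(U(-\sigma t)u(\sigma t),H(t))_{L^2}=-i\,(u(0),D(t)G)_{L^2}-i\,(r(t),D(t)G)_{L^2},
\]
so the first term tends to $0$ because $D(t)G\rightharpoonup0$ and $u(0)\in L^2$, while the second is bounded by $\Lebn{r(t)}{2}\Lebn{G}{2}\to0$. Hence $(U(-\sigma t)u(\sigma t),H(t))_{L^2}\to0$ as $t\to0$ for both $\sigma=1$ and $\sigma=2$, which is the claim. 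I expect the main obstacle to be the weak-convergence step, namely extracting from the weight hypothesis $u_+\in H^{0,\frac{d}{d+2}}$ the sub-$L^2$ integrability $G\in L^{p_0}$ that fuels the dispersive scaling; once this is secured the remainder is soft functional analysis. I would also flag that, for this $t\to0$ limit, the hypothesis \eqref{asmp:1} does not enter the argument, in contrast with the large-time analysis underlying the companion Lemma~\ref{lem:key1}.
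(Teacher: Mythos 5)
Your proof establishes the literal statement with $t\to 0$, but that limit is a typo: the intended (and actually used) assertion is $\lim_{t\to\I}(U(-\sigma t)u(\sigma t),H(t))_{L^2}=0$. This is forced by the context in three ways: the paper's own proof of the lemma opens by reducing to $\lim_{t\to\I}(U(-\sigma t)V_\l(\sigma t),D(t)G)_{L^2}=0$; the lemma is combined in Section 2 with Lemma \ref{lem:key1}, whose limit is $t\to\I$, to produce the contradiction $\Lebn{G}{2}^2=0$, which only works if all three limits are taken along the same $t\to\I$; and the solution is only assumed to exist on $[T,\I)$ with $T\in\R$, so $u(\sigma t)$ for small $t$, and in particular the $u(0)$ your argument pairs against, need not be defined, while \eqref{asmp:1} is an assumption about $t\to\I$. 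Your own closing observation --- that the hypothesis \eqref{asmp:1} never enters your argument --- is precisely the symptom that you are proving a different statement: in the intended lemma, \eqref{asmp:1} is essential, since it is what lets one replace $u(\sigma t)$ by the profile $V_\l(\sigma t)$.

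The strong--weak duality mechanism cannot be transplanted to the correct limit. The weak half does survive: $D(t)G\rightharpoonup 0$ in $L^2$ as $t\to\I$ as well (test against compactly supported $\phi$ and use $\Lebn{D(t)G}{L^2(\supp\phi)}=\Lebn{G}{L^2(\supp\phi/2t)}\to 0$). But the strong half fails: by \eqref{asmp:1}, $U(-\sigma t)u(\sigma t)=U(-\sigma t)V_\l(\sigma t)+o_{L^2}(1)$, and for $\l\neq 0$ the family $U(-\sigma t)V_\l(\sigma t)$ is merely bounded in $L^2$ with no strong limit --- the logarithmic phase in \eqref{eq:Vl} is exactly what obstructs convergence of $U(-t)u(t)$. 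A bounded family paired against a weakly null family need not tend to zero, so soft functional analysis gives nothing here. The paper's proof instead exposes the quadratic oscillation $E(\sigma t)=e^{i\sigma t|x|^2}$ in the pairing $(U(-\sigma t)V_\l(\sigma t),D(t)G)_{L^2}$, approximates $D(1/2\sigma)G$ by cutoffs, integrates by parts against $e^{i\sigma t|x|^2}$ after inserting the frequency truncation $\chi(t)=\psi(t^{-1/2}|\nabla|)$, and controls the merely H\"older-continuous, logarithmically growing factor $e^{-i\l|\wha{u_+}|^{2/d}\log\sigma t}$ via Proposition \ref{prop:Visan} and a fractional Leibniz rule; this oscillatory analysis is the actual content of the lemma and is entirely absent from your proposal. (A secondary point: even within your $t\to0$ reading, the exponent $p_0=\frac{2d}{d+2}$ is less than $1$ when $d=1$, so the H\"older duality step is invalid there; since $G\in L^{2/3}\cap L^2\subset L^1$ in that case, one should take $p_0=1$ instead.)
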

\begin{lemma}\label{lem:key3}
Suppose that \eqref{asmp:2} holds for some $u_+ \in H^{0,\frac{d}{d+2}}$ and some $\l\in \R$. Then,
\[
	\lim_{t\to0} \sum_{n\neq 0} g_n \( \int_t^{2t} U(-s) F_n(u(s)) ds , H(t)\)_{L^2} = 0.
\]
\end{lemma}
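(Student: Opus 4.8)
The plan is to prove the stated vanishing as $t\to0$ by a single crude bound, exploiting that the time slab $[t,2t]\times\R^d$ collapses to a null set as $t\to0$, so the space-time mass of $u$ carried over it disappears; no cancellation among the oscillating factors $F_n$ is needed for this particular limit. First I would estimate each pairing by Cauchy-Schwarz. Since $D(t)$ is unitary, $H(t)=-iD(t)G$ has the $t$-independent norm $\Lebn{H(t)}{2}=\Lebn{G}{2}$, and by Minkowski's integral inequality together with the unitarity of $U(-s)$,
\[
	\left| \left( \int_t^{2t} U(-s) F_n(u(s))\, ds,\, H(t) \right)_{L^2} \right|
	\le \Lebn{G}{2} \int_t^{2t} \Lebn{F_n(u(s))}{2}\, ds
\]
for every $n\in\Z$.

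Next I would use that $|F_n(z)|=|z|^{1+2/d}$ for all $n$, so the right-hand side is in fact independent of $n$: $\Lebn{F_n(u(s))}{2}=\norm{u(s)}_{L^{\frac{2(d+2)}{d}}_x}^{\frac{d+2}{d}}$. Summing over $n\neq0$ and pulling out the constant $\sum_{n\neq0}|g_n|<\I$, finite by the hypothesis $\{g_n\}\in\ell^1(\Z)$, bounds the full sum by
\[
	\Big( \sum_{n\neq0} |g_n| \Big) \Lebn{G}{2} \int_t^{2t} \norm{u(s)}_{L^{\frac{2(d+2)}{d}}_x}^{\frac{d+2}{d}}\, ds .
\]
It therefore suffices to show that this remaining time integral tends to $0$ as $t\to0$.

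For the final step I would apply Hölder's inequality in $s$ on $[t,2t]$, whose length is $t$,
\[
	\int_t^{2t} \norm{u(s)}_{L^{\frac{2(d+2)}{d}}_x}^{\frac{d+2}{d}}\, ds
	\le t^{\frac12}\, \norm{u}_{L^{\frac{2(d+2)}{d}}_{t,x}([t,2t]\times\R^d)}^{\frac{d+2}{d}} ,
\]
and then invoke the local space-time integrability $u\in L^{\frac{2(d+2)}{d}}_{t,\mathrm{loc}}$ built into Definition \ref{def:sol}. As $t\to0$ the slabs $[t,2t]\times\R^d$ shrink to the null set $\{0\}\times\R^d$, so by absolute continuity of the Lebesgue integral (equivalently, dominated convergence applied to $\mathbf{1}_{[t,2t]}(s)\,\norm{u(s)}_{L^{\frac{2(d+2)}{d}}_x}^{\frac{2(d+2)}{d}}$) the space-time norm $\norm{u}_{L^{\frac{2(d+2)}{d}}_{t,x}([t,2t]\times\R^d)}\to0$; combined with $t^{1/2}\to0$ this yields the claim.

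The one point requiring care is the meaning of the limit: $t\to0$ is meaningful only when the solution exists on a right-neighbourhood of the origin, i.e. $0\in[T,\I)$, in which case $[t,2t]\subset[T,\I)$ for all small $t>0$ and the estimates above apply verbatim. The main (and essentially only) obstacle is then the vanishing of the space-time norm on the collapsing slab, which I would secure from the membership $u\in L^{\frac{2(d+2)}{d}}_{t,\mathrm{loc}}([T,\I);L^{\frac{2(d+2)}{d}}_x)$ guaranteed by Definition \ref{def:sol}. Since the bound above is uniform in $n$, the argument relies solely on this local integrability and invokes no oscillatory cancellation among the $F_n$.
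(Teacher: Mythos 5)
There is a genuine gap, and it is located at the very first step: you have taken the ``$t\to0$'' in the statement literally, whereas it is a typo for $t\to\I$. This is unambiguous from the paper's structure: the three key lemmas are combined in Section 2 to contradict the identity \eqref{eq:int} \emph{as $t\to\I$} (Lemma \ref{lem:key1} is stated with $\lim_{t\to\I}$, and Lemma \ref{lem:key2} carries the same ``$t\to0$'' typo but is proved as $t\to\I$), the solution only exists on $[T,\I)$, and the hypothesis \eqref{asmp:2} is an assumption about behavior at infinity. The fact that your argument never uses \eqref{asmp:2}, nor the weight $u_+\in H^{0,\frac{d}{d+2}}$, is the tell-tale sign that you are proving a different (and, for the paper's purposes, vacuous) statement. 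Your proof of the $t\to0$ assertion is fine as far as it goes --- Cauchy--Schwarz, $|F_n(z)|=|z|^{1+2/d}$ uniformly in $n$, H\"older in time, and absolute continuity of the integral --- but for $t\to\I$ the slab $[t,2t]$ has length $t\to\I$ and this crude bound gives only $O(1)$, not $o(1)$: indeed \eqref{asmp:2} together with \eqref{eq:Vl2} shows $t^{\frac{d}{2(d+2)}}\norm{u}_{L^{\frac{2(d+2)}{d}}_{t,x}((t,2t)\times\R^d)}$ stays bounded but does \emph{not} vanish, and the $n=0$ analogue of your pairing actually converges to $\norm{G}_{L^2}^2\neq0$ by Lemma \ref{lem:key1}. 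So no size estimate uniform in $n$ can prove the lemma; your opening claim that ``no cancellation among the oscillating factors $F_n$ is needed'' is precisely what fails.

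The missing idea is oscillation in $n\neq0$. In the paper's proof, the uniform-in-$n$ bound you derived plays only the auxiliary role of justifying dominated convergence in $n$; the heart of the argument is that, after replacing $u$ by the profile $V_\l$ (using \eqref{asmp:2}), each fixed-$n$ pairing becomes an oscillatory integral with quadratic phase $E(n\s t)=e^{in\s t|x|^2}$, which survives in $F_n(V_\l)$ exactly when $n\neq0$ because $F_n$ is not gauge invariant. One then splits with the time-dependent frequency cutoff $\chi(t)=\psi(t^{-1/2}|\n|)$, kills the low-frequency piece by integration by parts against the phase (nonstationary away from $x=0$), and controls the high-frequency piece via Mihlin's multiplier theorem, a fractional Leibniz rule, and the fractional chain rule for H\"older-continuous functions (Proposition \ref{prop:Visan}), where the regularity $u_+\in H^{0,\frac{d}{d+2}}$ --- i.e.\ $|\n|^{\frac{d}{d+2}}\wha{u_+}\in L^2$ --- is used to absorb the logarithmically growing factor from the phase correction $e^{i\f(t)}$. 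None of this machinery is avoidable by measure-theoretic considerations on the slab.
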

The three lemmas are proved in the forthcoming section.
 
\section{Completion of the proof of Theorem \ref{thm:main}}
\subsection{Summary of property of $V_\l(t)$}
We first collect basic properties on the asymptotic profile $V_\l(t)$ defined in \eqref{eq:Vl}.
\begin{lemma} \label{lem:ap1}
(i) For $p \ge 2$ and $t>0$,
\begin{equation}\label{eq:Vl1}
	\Lebn{V_{\l}(t)}{p} = Ct^{-d\(\frac12 - \frac1p \)}\Lebn{\wha{u_+}}{p}.
\end{equation}
(ii) For $t>0$,
\begin{equation}\label{eq:Vl2}
	t^{\frac{d}{2(d+2)}} \norm{V_{\l}}_{L^{\frac{2(d+2)}{d}}_{t,x}((t,2t)\times \R^d)}
	\lesssim \Lebn{\wha{u_+}}{\frac{2(d+2)}2}
\end{equation}
\end{lemma}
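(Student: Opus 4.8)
The plan is to exploit the fact that, among the three ingredients making up $V_\l(t)$ in \eqref{eq:Vl}, two of them are invisible to every $L^p$ norm: the modulation $M(t)$ and the logarithmic phase correction $\exp(-i\l|\wha{u_+}|^{2/d}\log t)$, together with the constant $e^{-id\pi/4}$, are all pointwise unimodular, so only the dilation $D(t)$ affects the size of $V_\l(t)$. Concretely, I would first record the pointwise identity
\[
	|V_\l(t,x)| = (2t)^{-\frac{d}2}\left|\wha{u_+}\(\frac{x}{2t}\)\right|,
\]
valid for $t>0$ because each discarded factor has modulus one (after scaling, the phase correction becomes $\exp(-i\l|\wha{u_+}(x/2t)|^{2/d}\log t)$, still unimodular).

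For part (i), I would raise this identity to the $p$-th power, integrate in $x$, and perform the change of variables $y=x/(2t)$ with Jacobian $(2t)^d$. The power of $t$ emerging is $(2t)^{d(1-p/2)}$, so taking $p$-th roots yields \eqref{eq:Vl1} with the explicit constant $C=2^{-d(\frac12-\frac1p)}$.

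For part (ii), the key arithmetic observation is that the exponent appearing in (i) is tuned so that the time integral over a dyadic interval converges at exactly the critical rate. Writing $p_0=\frac{2(d+2)}d$ for the space-time Strichartz exponent, part (i) gives
\[
	\Lebn{V_\l(s)}{p_0}^{p_0} = C^{p_0}\, s^{-d p_0\(\frac12-\frac1{p_0}\)}\Lebn{\wha{u_+}}{p_0}^{p_0},
\]
and one checks $d p_0\(\tfrac12-\tfrac1{p_0}\)=\tfrac{dp_0}2-d=(d+2)-d=2$. Hence integrating over $s\in(t,2t)$ produces $\int_t^{2t}s^{-2}\,ds=\tfrac1{2t}$, so that
\[
	\norm{V_\l}_{L^{p_0}_{t,x}((t,2t)\times\R^d)} = C\,(2t)^{-\frac1{p_0}}\Lebn{\wha{u_+}}{p_0}.
\]
Multiplying by $t^{\frac{d}{2(d+2)}}=t^{1/p_0}$ cancels the remaining power of $t$ and leaves a constant multiple of $\Lebn{\wha{u_+}}{p_0}$, which is \eqref{eq:Vl2} (note that the spatial exponent produced is precisely $p_0$, matching the space-time exponent on the left).

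There is essentially no analytic obstacle here: the statement reduces to a direct computation once the unimodular factors are stripped away. The only point requiring care is the bookkeeping of exponents, in particular the verification that $d p_0\(\frac12-\frac1{p_0}\)=2$, which is exactly what makes the dyadic time integral furnish the factor $t^{-1}$ needed to absorb the prefactor $t^{d/(2(d+2))}$ and render (ii) scale-invariant.
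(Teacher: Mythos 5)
Your proof is correct and is precisely the computation the paper has in mind: the paper's own ``proof'' consists of the single remark that the lemma is obvious from the definition \eqref{eq:Vl}, and your argument (strip the unimodular factors $e^{-id\pi/4}$, $M(t)$, and the logarithmic phase, then change variables under the dilation $D(t)$) is the unique natural way to fill in those details. Your exponent bookkeeping in (ii) also confirms that the exponent $\frac{2(d+2)}{2}$ on the right-hand side of \eqref{eq:Vl2} is a typo for $\frac{2(d+2)}{d}$, which is the norm actually used when the lemma is applied later in the paper.
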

\begin{proof}
It is obvious by definition \eqref{eq:Vl}.
\end{proof}
\begin{lemma}\label{lem:equivalence}
If $u_+ \in H^{0,\frac{d}{d+2}}$ then
\begin{equation}\label{eq:Vl3}
	\lim_{t\to\I} \norm{U(t)u_+ - V_0(t)}_{L^2} =0
\end{equation}
and
\begin{equation}\label{eq:Vl4}
	\lim_{t\to\I} t^{\frac{d}{2(d+2)}}\norm{U(\cdot)u_+ - V_0}_{L^{\frac{2(d+2)}{d}}_{t,x}((t,\I)\times \R^d)}
	=0
\end{equation}
hold.
\end{lemma}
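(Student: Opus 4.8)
The plan is to begin with the standard factorization of the free propagator,
\[
	U(t) = e^{-i\frac{d\pi}4}\, M(t) D(t)\, \F M(t),
\]
obtained by completing the square in the Fresnel kernel of $U(t)=e^{it\Delta}$ and recognizing the resulting oscillatory integral as a Fourier transform (with the $(2\pi)^{-d/2}$ normalization used in \eqref{eq:Vl}). Comparing with the definition $V_0(t)=e^{-i\frac{d\pi}4}M(t)D(t)\wha{u_+}$ and writing $\wha{u_+}=\F u_+$, I would record the exact identity
\[
	U(t)u_+ - V_0(t) = e^{-i\frac{d\pi}4}\, M(t) D(t)\, \F\big[(M(t)-1)u_+\big].
\]
Both assertions then reduce to controlling the single quantity $(M(t)-1)u_+=(e^{i|x|^2/(4t)}-1)u_+$, since $M(t)$ and $\F$ preserve every relevant norm (up to the fixed phase) while $D(t)$ acts by an explicit scaling on each $L^p$.

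For \eqref{eq:Vl3} I would use the $L^2$-unitarity of $M(t)$, $D(t)$, $\F$ to get $\Lebn{U(t)u_+-V_0(t)}{2}=\Lebn{(M(t)-1)u_+}{2}$; since $e^{i|x|^2/(4t)}\to1$ pointwise, dominated convergence (the integrand is bounded by $4|u_+|^2\in L^1$) gives the limit $0$, needing only $u_+\in L^2$. For \eqref{eq:Vl4}, set $q=\frac{2(d+2)}d$. Using that $M(s)$ is a phase while $D(s)$ rescales the $L^q_x$ norm by $(2s)^{-d(\frac12-\frac1q)}=(2s)^{-\frac d{d+2}}$, I obtain the pointwise-in-time identity
\[
	\Lebn{U(s)u_+-V_0(s)}{q} = (2s)^{-\frac d{d+2}}\, \big\|\F[(M(s)-1)u_+]\big\|_{L^q}.
\]
Raising to the power $q$ and integrating in time, the weight becomes $(2s)^{-\frac{dq}{d+2}}=(2s)^{-2}$, whose integral over $(t,\I)$ is $\tfrac1{4t}$; pulling $g(s):=\|\F[(M(s)-1)u_+]\|_{L^q}$ out of the integral via its supremum yields
\[
	t^{\frac d{2(d+2)}}\norm{U(\cdot)u_+-V_0}_{L^{q}_{t,x}((t,\I)\times\R^d)} \le 4^{-1/q}\sup_{s\ge t} g(s),
\]
so it remains to show $g(s)\to0$ as $s\to\I$.

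For this last point I would pass through the weighted space rather than estimate in $L^q_x$ directly: $\F$ maps $H^{0,\frac d{d+2}}$ boundedly into $H^{\frac d{d+2}}(\R^d)$, and the Sobolev embedding $H^{\frac d{d+2}}(\R^d)\hookrightarrow L^{q}(\R^d)$ holds because $\frac1q=\frac12-\frac1{d+2}$ matches the scaling exactly. Hence
\[
	g(s) \lesssim \big\|(M(s)-1)u_+\big\|_{H^{0,\frac d{d+2}}} = \big\|(e^{i|x|^2/(4s)}-1)\,\Jbr{x}^{\frac d{d+2}} u_+\big\|_{L^2},
\]
and this tends to $0$ by dominated convergence, now using the hypothesis $\Jbr{x}^{\frac d{d+2}}u_+\in L^2$. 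The delicate point — and the reason the precise weight $H^{0,d/(d+2)}$ enters — is exactly this matching: the exponent $q$ and the weight $d/(d+2)$ sit at the borderline of the embedding, so a direct Hausdorff–Young or H\"older argument in $L^q_x$ would be logarithmically divergent. Keeping every estimate at the level of the weighted $L^2$ norm, where dominated convergence applies cleanly, is what makes the argument close.
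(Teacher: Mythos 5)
Your proof is correct and takes essentially the same route as the paper: the factorization $U(t)=e^{-i\frac{d\pi}{4}}M(t)D(t)\F M(t)$, unitarity for the $L^2$ limit \eqref{eq:Vl3}, and the Sobolev embedding at the exponent $\frac{d}{d+2}$ followed by integration in time for \eqref{eq:Vl4}. The only immaterial difference is that you exhibit the smallness on the physical side, applying dominated convergence to $(M(t)-1)u_+$ with the weight $\Jbr{x}^{\frac{d}{d+2}}$, whereas the paper works on the Fourier side with $\(U\(-\frac{1}{4t}\)-1\)\wha{u_+}$ and invokes strong continuity of the free group; the two formulations are conjugate under $\F$.
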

\begin{proof} Let $t>0$.
Note that
\[
	U(t)u_+ - V_0(t)
	= i^{-\frac{d}2} M(t) D(t) \(U\(-\frac1{4t}\) -1\)\wha{u_+}
\]
The first one follows from unitary property of $M(t)$ and $D(t)$, and the continuity property $U(t)u_+ \in C(\R;L^2)$.
By the Sobolev embedding, we have
\begin{align*}
	\norm{U(t)u_+ - V_0(t)}_{L^{\frac{2(d+2)}{d}}_x(\R^d)}
	&{}=(2t)^{-\frac{d}{d+2}}\norm{\(U\(-\frac1{4t}\) -1\)\wha{u_+}}_{L^{\frac{2(d+2)}{d}}_x(\R^d)} \\
	&{}\lesssim t^{-\frac{d}{d+2}}\norm{\(U\(-\frac1{4t}\) -1\)|\n|^{\frac{d}{d+2}} \wha{u_+}}_{L^2}\\
	&{}=o(t^{-\frac{d}{d+2}}),
\end{align*}
from which the second one follows.
\end{proof}

\subsection{Proof of Lemma \ref{lem:key1}}
This part is the same as in \cite{STs}. We give a proof for completeness.
\begin{proof}
Remark that
\[
	H(t) = -i \int_t^{2t} F_0 (V_{\l}(s)) ds.
\]
Hence, we have
\[
	\(-i \int_t^{2t} U(-s)F_0 (u(s)) ds,H(t)\)
	=(I_1(t),H(t))+(I_2(t),H(t))+\norm{H(t)}_{L^2}^2,
\]
where
\[
	I_1(t) = -i \int_t^{2t} U(-s)(F_0 (u(s))-F_0(V_\l(s))) ds
\]
and
\[
	I_2(t) = -i \int_t^{2t} (U(-s)-1)F_0(V_\l(s)) ds.
\]
Noting that $D(t)$ is unitary on $L^2$,
it suffices to show that $I_1(t)$ and $I_2(t)$ converge to zero strongly in $L^2$ as $t\to\I$.

By H\"older's inequality and \eqref{eq:Vl1}, we obtain
\begin{align*}
	\Lebn{I_1(t)}{2}
	\lesssim{}&  \int_{t}^{2t} \Lebn{u(s) -V_{\l}(s)}{\frac{2(d+2)}{d}}^{1+\frac{2}d} ds\\
	&+ \int_t^{2t} \Lebn{V_{\l}(s)}{\frac{2(d+2)}{d}}^{\frac{2}d} \Lebn{u(s) -V_{\l}(s)}{\frac{2(d+2)}{d}} ds
	 ds \\
	\lesssim{}& \( t^{\frac{d}{2(d+2)}} \norm{u - V_{\l}}_{L^{\frac{2(d+2)}{d}}_{t,x}((t,2t)\times \R^d)}\)^{1+\frac{2}{d}}\\
	&+\Lebn{\wha{u_+}}{\frac{2(d+2)}{d}}^{\frac{2}d} t^{\frac{d}{2(d+2)}} \norm{u - V_{\l}}_{L^{\frac{2(d+2)}{d}}_{t,x}((t,2t)\times \R^d)} \to 0
\end{align*}
as $t\to\I$, thanks to the assumption \eqref{asmp:2}.

On the other hand,
since $|\wha{u_+}|^{1+\frac{2}d} \in L^2$ by the assumption $u_+\in H^{0.\frac{d}{d+2}}$,
\begin{align*}
	\Lebn{I_2(t)}{2} \le{}& \int_t^{2t} \Lebn{ \( U(-s) -1 \) |D(s)\wha{u_+}|^{1+\frac{2}d} }{2} ds \\
	={}&  \int_t^{2t} \frac1{2s} \Lebn{ \( U\(-\frac{1}{4s}\) -1 \) |\wha{u_+}|^{1+\frac{2}d}}{2} ds \\
	={}&  \int_1^{2} \frac1{2\s} \Lebn{ \( U\(-\frac{1}{4\s t}\) -1 \) |\wha{u_+}|^{1+\frac{2}d}}{2} d\s 
	\to 0
\end{align*}
by means of continuity of $U(t)$ and Lebesgue's convergence theorem.
\end{proof}

\subsection{Proof of Lemma \ref{lem:key3}}
We next prove Lemma \ref{lem:key3}.
\begin{proof}
First, we see from \eqref{asmp:2} and \eqref{eq:Vl2} that
there exists $T>0$ independent of $n$ such that
\begin{align*}
	\abs{\( \int_t^{2t} U(-s) F_n(u(s)) ds , H(t)\)_{L^2}}
	&{}\lesssim  \(t^{\frac{d}{2(d+2)}}\norm{u}_{L^{\frac{2(d+2)}d}_{t,x}((t,2t)\times \R^d)}\)^{1+\frac2d} \norm{G}_{L^2} \\
	&{}\lesssim (1+ \norm{\wha{u_+}}_{L^{\frac{2(d+2)}d}})^{1+\frac2d} \norm{G}_{L^2}
\end{align*}
for any $n$ and any $t\ge T$. Since $\{g_n\}_n \in\ell^1(\Z)$,
by means of Lebesgue's convergence theorem (in $n$),
it suffices to show that
\[
\( \int_t^{2t} U(-s) F_n(u(s)) ds , H(t)\)_{L^2} \to 0
\]
as $t\to\I$ for each fixed $n\neq0$.

Fix $n\neq 0$. Then, one has
\[
	\( \int_t^{2t} U(-s) F_n(u(s)) ds , H(t)\)_{L^2}
	=\( \int_t^{2t} U(-s)F_n(V_\l(s)) ds , H(t)\)_{L^2} + I_{1,n}(t),
\]
where
\[
	I_{1,n}(t) = -i \int_t^{2t} U(-s)(F_n (u(s))-F_n(V_\l(s))) ds.
\]

Remark that
\[
	|F_n(z_1)-F_n(z_2)| \le C|n|(|z_1-z_2|^{1+\frac2d} + |z_1|^{\frac2d}|z_1-z_2|),\quad \forall z_1,z_2 \in \C.
\]
Hence, just as in the proof of Lemma \ref{lem:key1}, we obtain $I_{1,n}(t) \to 0$
as $t\to\I$.

Let $E(t)=e^{it|x|^2}$. A computation shows
\begin{align*}
&\( \int_t^{2t} U(-s)F_n(V_\l(s)) ds , H(t)\)_{L^2}\\
% = \int_t^{2t} i(F_n(V_\l(s)),U(s)D(t)G)_{L^2} ds\\
% &=c_n\int_t^{2t} \( E^n(s) e^{-in\l |\wha{u_+}|^{\frac2d}\log s}|\wha{u_+}|^{1+\frac{2}{d}}, D\(\frac{t}{2s}\)U\(\frac{s}{4t^2}\)G \)_{L^2} \frac{ds}{2s}\\
&=c_n\int_1^{2} \( E(n\sigma t) e^{-in\l |\wha{u_+}|^{\frac2d}\log \sigma t}F_n(\wha{u_+}), D\(\frac{1}{2\sigma}\)U\(\frac{\sigma}{4t}\)G \)_{L^2} \frac{ds}{2\sigma}
\end{align*}
where $c_n \in \C$ is a constant such that $|c_n|=1$.
As the integrand is bounded by $\norm{\wha{u_+}}_{L^{\frac{2(d+2)}{d}}}^{1+\frac2d} \norm{G}_{L^2} \in L^1_\sigma((1,2))$,
we shall show it converges to zero as $t\to\I$ for each $\sigma \in (1,2)$.
Since $U(t)\to \mathrm{Id}$ strongly as $t\to0$, we shall show
\begin{equation}\label{eq:pf1}
	\( E(n\sigma t) e^{-in\l |\wha{u_+}|^{\frac2d}\log \sigma t}F_n(\wha{u_+}), D\(\frac{1}{2\sigma}\)G \)_{L^2} \to 0
\end{equation}
as $t\to\I$. % where $E(t)=e^{it|x|^2}$.

We prove \eqref{eq:pf1}.
Fix $\sigma \in [1,2]$.
Set $\f(t) = -\l n|\wha{u_+}|^{\frac{2}{d}} \log \sigma t$.
By density argument, we may approximate $D(1/2\sigma)G \in L^2(\R^d)$ by $\v \eta$, where $\eta \in C_0^{\I}(\R^d, \C)$ and $\v \in C_0^{\I}(\R^d, \R)$ is a nonnegative radial cutoff such that $\supp \v \subset \{x \in \R^d;\; \delta^{-1} \le |x| \le \delta\}$ for $\delta \gg 1$. Thus, it suffices to show
\begin{equation}\label{eq:pf1_1}
\( E(n\sigma t) e^{i\f(t)}F_n(\wha{u_+}), \v \eta \)_{L^2} \to 0
\end{equation}
as $t\to\I$.

Let $\psi(r) \in C_0^\I(\R)$ be another nonnegative radial cutoff such that $\psi(r)=1$ on $0\le r \le 1$
and $\psi(r)=0$ for $r\ge2$. 
Then, $\chi(t):= \psi (t^{-1/2}|\n|):=\F^{-1} \psi(t^{-1/2}|\xi|)\F$ is a time dependent regularizing operator.
The left hand side of \eqref{eq:pf1_1} is written as
\begin{align*}
& \( E(n\sigma t)  \chi(t) (e^{i\f(t)}F_n(\wha{u_+}) \overline{\eta}),\v \)_{L^2}
+\( E(n\sigma t) (1-\chi(t)) (e^{i\f(t)}F_n(\wha{u_+})\overline{\eta}) , \v\)_{L^2}\\
&{}=: I_{3,n}(t) + I_{4,n}(t).
\end{align*}

Let us first estimate $I_{3,n}(t)$.
By integration by parts,
\[
	I_{3,n} (t)= -\int e^{in\sigma t |x|^2} \nabla \cdot \( \frac{x}{2in\sigma t |x|^2} (\chi(t) e^{i\f(t)}F_n(\wha{u_+})\overline{\eta})(x)\v(x)  \) dx
\]
Using $\supp \v, \supp |\n\v| \subset \{ |x| \ge \delta^{-1}\}$ and $\norm{\n \chi(t)}_{\mathcal{L}(L^2)}\lesssim t^{1/2}$,
one sees that $|I_{3,n}(t)|\to0$ as $t\to\I$.

We move to the estimate of $I_4(t)$. Let $\tilde{d}:=\max(3,d)$.
A use of H\"older's inequality gives us
\begin{align*}
	|I_{4,n}(t)| \le C\norm{(1-\chi(t))e^{i\f(t)}F_n(\wha{u_+})\overline{\eta}}_{L^{\frac{2\tilde{d}}{\tilde{d}+2}}} \norm{\v}_{L^{\frac{2\tilde{d}}{\tilde{d}-2}}}.
\end{align*}
One sees from Mihlin's multiplier theorem (see \cite[Theorem 5.2.7]{GR}) that
$\norm{|\nabla|^{-\theta}(1-\chi(t))}_{\mathcal{L}(L^p)}\lesssim t^{-\theta/2}$ for any $p\in(1,\I)$ and $\theta>0$.
Combining this with a fractional Leibniz rule, we obtain
\begin{align*}
	&{}\norm{(1-\chi(t))e^{i\f(t)}F_n(\wha{u_+})\overline{\eta}}_{L^{\frac{2\tilde{d}}{\tilde{d}+2}}} \\
	&{}\lesssim t^{-\frac{\theta_0}{2}} \( \norm{|\n|^{\theta_0} \( (e^{i\f(t)}-1) F_n(\wha{u_+})\overline{\eta}\)}_{L^{\frac{2\tilde{d}}{\tilde{d}+2}}} + \norm{|\n|^{\theta_0} \( F_n(\wha{u_+})\overline{\eta}\)}_{L^{\frac{2\tilde{d}}{\tilde{d}+2}}} \) \\
	&{}\lesssim t^{-\frac{\theta_0}{2}} \Lebn{|\n|^{\theta_0} (e^{i\f(t)}-1)}{p_1} \Lebn{F_n(\wha{u_+})}{2} \Lebn{\eta}{p_2} \\
	&\quad + t^{-\frac{\theta_0}{2}} \Lebn{e^{i\f(t)} -1}{p_2} \Lebn{|\n|^{\theta_0} F_n (\wha{u_+})}{p_3} \Lebn{\eta}{p_4} 
	\\
	&\quad + t^{-\frac{\theta_0}{2}}\Lebn{e^{i\f(t)} -1}{p_2}
	\Lebn{F_n(\wha{u_+})}{2} \Lebn{|\n|^{\theta_0}\eta}{p_5} \\
	&\quad + t^{-\frac{\theta_0}{2}} \Lebn{|\n|^{\theta_0} F_n (\wha{u_+})}{p_3} \Lebn{\eta}{p_6} + t^{-\frac{\theta_0}{2}} 	\Lebn{F_n(\wha{u_+})}{2} \Lebn{|\n|^{\theta_0}\eta}{\tilde{d}},
\end{align*}
where the exponents $\theta_0$, $p_1$, $p_2$, $p_3$, and $p_4$ are defined as follows:
\begin{align*}
	\theta_0 =& \frac{d}{10\tilde{d}(d+2)},&
	\frac1{p_1}=&\frac{10d+1}{10\tilde{d}(d+2)},&
	\frac1{p_2}=&\frac{2}{\tilde{d}(d+2)}, \\
	\frac1{p_3}=&\frac{5\tilde{d}(d+2)+1}{10\tilde{d}(d+2)},&
	\frac1{p_4}=&\frac{10d-1}{10\tilde{d}(d+2)}, &
	\frac1{p_5}=&\frac{d}{\tilde{d}(d+2)}, \\
	\frac1{p_6}=&\frac{19}{10\tilde{d}(d+2)}.
\end{align*}

We now recall the following property.

\begin{proposition}[{\cite[Proposition A.1]{MR2318286}}]\label{prop:Visan}
Let $F$ be a H\"older continuous function of order $\a \in (0,1)$. Then for every $0 < \s < \a$, $1<p<\I$, and $\frac{\s}{\a} < s<1$, we have
\[
	\Lebn{|\n|^{\s} F(u)}{p} \le C \Lebn{|u|^{\a - \frac{\s}{s}}}{p_1}\Lebn{|\n|^{s}u}{\frac{\s}{s}p_2}^{\frac{\s}{s}},
\]
provided $\frac{1}{p} = \frac{1}{p_1} + \frac{1}{p_2}$ and $\( 1-\frac{\s}{\a s}\)p_1 >1$.
\end{proposition}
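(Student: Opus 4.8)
This is a form of the fractional chain rule for a merely Hölder continuous $F$, so the plan is to prove it by Littlewood--Paley/square-function techniques rather than by any fixed-point or energy argument. First I would reduce the homogeneous derivative to a square function: for $0<\sigma<1$ and $1<p<\infty$ one has $\| |\nabla|^\sigma g \|_{L^p} \sim \bigl\| (\sum_N N^{2\sigma} |P_N g|^2)^{1/2} \bigr\|_{L^p}$, where $P_N$ are the Littlewood--Paley projections to frequency $|\xi|\sim N$, whose convolution kernels $K_N$ have vanishing integral. Taking $g=F(u)$ turns the claim into an $L^p$ bound for the square function built from the blocks $N^\sigma P_N F(u)$.

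For a single block I would use the cancellation $\int K_N = 0$ to write $P_N F(u)(x) = \int K_N(y)\,(F(u(x-y))-F(u(x)))\,dy$ and then the Hölder continuity of order $\alpha$ to obtain the pointwise bound $|P_N F(u)(x)| \lesssim \int |K_N(y)|\,|u(x-y)-u(x)|^{\alpha}\,dy$. The decisive device is to split the Hölder exponent as $\alpha = (\alpha-\tfrac{\sigma}{s}) + \tfrac{\sigma}{s}$ and to treat the two parts differently. The factor $|u(x-y)-u(x)|^{\sigma/s}$ is controlled by the pointwise Riesz-potential estimate $|u(x)-u(x-y)| \lesssim |y|^{s}\,(Mg(x)+Mg(x-y))$ with $g=|\nabla|^s u$ (valid for $0<s<1$, coming from $u=c(-\Delta)^{-s/2}g$ together with standard kernel-difference bounds), which supplies a gain $|y|^{\sigma}$; the remaining factor is dominated crudely by $(|u(x)|+|u(x-y)|)^{\alpha-\sigma/s}$. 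A Hölder inequality in $y$ with the conjugate pair $q=\tfrac{\alpha s}{\sigma}$, $q'=\tfrac{\alpha s}{\alpha s - \sigma}$ then renormalizes both factors to the common power $\alpha$, so that averaging against $|K_N|$ produces maximal functions of $(Mg)^{\alpha}$ and of $|u|^{\alpha}$.

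The last step is assembly: integrate in $x$, apply Hölder with $\tfrac1p=\tfrac1{p_1}+\tfrac1{p_2}$, and invoke the Hardy--Littlewood and vector-valued Fefferman--Stein maximal inequalities to convert the nested maximal functions into $\| |u|^{\alpha-\sigma/s}\|_{L^{p_1}}$ and $\| |\nabla|^s u\|_{L^{(\sigma/s)p_2}}^{\sigma/s}$. The hypothesis enters here with no slack: the low-power factor is handled as $(M|u|^{\alpha})^{1-\sigma/(\alpha s)}$, and the Hardy--Littlewood operator is bounded on the corresponding space $L^{(1-\sigma/(\alpha s))p_1}$ exactly when $(1-\tfrac{\sigma}{\alpha s})p_1>1$, which is the stated admissibility condition; moreover $\tfrac{\sigma}{\alpha}<s$ guarantees $\alpha-\tfrac{\sigma}{s}>0$, so the exponent split is legitimate. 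The main obstacle I anticipate is not any single inequality but the summation over $N$: the naive per-scale bound is only $O(1)$ in $N$ and hence not square-summable, so one must split the frequencies at a well-chosen cutoff and exploit the genuine smoothness surplus $\alpha s-\sigma>0$ (again furnished by $s>\sigma/\alpha$) to gain decay $N^{-(\alpha s-\sigma)}$ for high frequencies, while the trivial bound $|P_N F(u)|\lesssim M(|u|^\alpha)$ gives decay $N^{\sigma}$ for low frequencies; balancing these two regimes against the two Hölder factors, so that the square function converges and reproduces the claimed product structure, is the heart of the argument.
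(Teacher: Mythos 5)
First, a point of context you could not have known: the paper does not prove this proposition at all. It is quoted verbatim, with citation, as \cite[Proposition A.1]{MR2318286}, and is used purely as a black box in the proofs of Lemmas \ref{lem:key3} and \ref{lem:key2}. So your attempt can only be measured against the proof in Visan's appendix, which is the standard Littlewood--Paley/maximal-function argument --- and your outline follows essentially that route. Your ingredients are all sound: the square-function reduction, the mean-zero-kernel bound $|P_N F(u)(x)|\lesssim\int|K_N(y)|\,|u(x-y)-u(x)|^{\alpha}dy$, the pointwise inequality $|u(x)-u(x-y)|\lesssim|y|^{s}\big(\mathcal{M}g(x)+\mathcal{M}g(x-y)\big)$ with $g=|\nabla|^{s}u$ (valid for $0<s<1\le d$ by your Riesz-potential argument), and the identification of $(1-\frac{\sigma}{\alpha s})p_1>1$ as exactly the Hardy--Littlewood condition for the factor $[\mathcal{M}(|u|^{\alpha})]^{1-\sigma/(\alpha s)}$. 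Most importantly, you are right that the per-scale bound produced by the H\"older split $\alpha=(\alpha-\frac{\sigma}{s})+\frac{\sigma}{s}$ is only $O(1)$ in $N$ and cannot be square-summed; recognizing this is the main test of whether one actually understands this proof.

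Where I would push back is on how you propose to finish, since the split in your middle paragraph is in fact unnecessary once the repair of your last paragraph is made. The completed argument uses two bounds, each carrying the \emph{full} power $\alpha$: the crude bound $N^{\sigma}|P_N F(u)|\lesssim N^{\sigma}\mathcal{M}(|u|^{\alpha})=:N^{\sigma}A$ and the smoothing bound $N^{\sigma}|P_N F(u)|\lesssim N^{\sigma-\alpha s}\big((\mathcal{M}g)^{\alpha}+\mathcal{M}((\mathcal{M}g)^{\alpha})\big)=:N^{-(\alpha s-\sigma)}B$. The "balancing" you flag as the heart is then a one-line dyadic computation: $\sum_{N}\min\big(N^{\sigma}A,N^{-(\alpha s-\sigma)}B\big)^{2}$ is comparable to its value at the crossover $N_{*}^{\alpha s}\sim B/A$, namely $A^{2(1-\sigma/(\alpha s))}B^{2\sigma/(\alpha s)}$; the fractional exponents of the conclusion thus come out of the optimization over $N$, not out of any pointwise H\"older split, and only scalar (not vector-valued Fefferman--Stein) maximal inequalities are needed after H\"older in $x$. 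One genuine caveat remains: converting $\|B^{\sigma/(\alpha s)}\|_{L^{p_2}}$ into $\||\nabla|^{s}u\|_{L^{(\sigma/s)p_2}}^{\sigma/s}$ by Hardy--Littlewood requires $(\sigma/s)p_2>1$, which the stated hypotheses do not force; to cover that range one must derive the smoothing bound with the modified maximal functions $(\mathcal{M}|\cdot|^{r})^{1/r}$, $r$ small, on Littlewood--Paley pieces of $u$ (Peetre-type estimates), since the Riesz-potential lemma you invoke only yields $r=1$. With that repair your plan is a correct and complete proof, in strategy the same as the cited one.
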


Let $\alpha_0=\alpha_0(d):=2/\tilde{d}$.
Since $e^{i\phi(t)}-1$ is a $\alpha_0$-H\"older function (of $\wha{u_+}$),
we see from Proposition \ref{prop:Visan} that
\begin{align*}
	\norm{|\n|^{\theta_0}(e^{i\f(t)}-1)}_{L^{p_0}}
	&{}\lesssim (|\l n|\log(t \s))^{\frac{d}2 \alpha_0}
	\norm{\wha{u_+}}_{L^{\frac{2(d+2)}{d}}}^{\alpha_0-\frac1{10\tilde{d}}}
	\norm{|\n|^{\frac{d}{d+2}}\wha{u_+}}^{\frac1{10\tilde{d}}}_{L^{2}}\\
	&{}\lesssim_n (\log(t \s))^{\frac{d}2 \alpha_0}\norm{|\n|^{\frac{d}{d+2}}\wha{u_+}}^{\alpha_0}_{L^{2}}.
\end{align*}
Further, using the Sobolev embedding, we deduce that
\begin{align*}
	\Lebn{e^{i\f(t)} -1}{p_2} &{} \lesssim (|\l n|\log(t \s))^{\frac{d}2 \alpha_0} \Lebn{\wha{u_+}}{d+2}^{\a_0} \\
	&{} \lesssim_{n} (\log(t \s))^{\frac{d}2 \alpha_0} \Lebn{|\n|^{\frac{d}{d+2}} \wha{u_+}}{2}^{\a_0}.   
\end{align*}

On the other hand, arguing as in \cite[Lemma 2.4]{MMU} and \cite[Lemma 3.7]{MS}, we have
\begin{equation}\label{eq:pf1_2}
\begin{aligned}
	\Lebn{|\n|^{\theta_0} F_n (\wha{u_+})}{p_3}
	&{}\lesssim |n|^{1+\frac2d} \Lebn{\wha{u_+}}{\frac{2(d+2)}{d}}^{\frac2d}\Lebn{|\n|^{\theta_0}\wha{u_+}}{\frac{10\tilde{d}(d+2)}{5d\tilde{d}+1}}\\
	&{}\lesssim_n \norm{|\n|^{\frac{d}{d+2}}\wha{u_+}}^{1+\frac2d}_{L^{2}}.
\end{aligned}
\end{equation}
Combining these estimates, we conclude that $I_{4,n}(t)\to0$ as $t\to\I$.
\end{proof}

\subsection{Proof of Lemma \ref{lem:key2}}
\begin{proof}
In view of \eqref{asmp:1}, it suffices to show that
\[
	\lim_{t \to \I} \(U(-\sigma t)V_{\l}(\sigma t), D(t)G \)_{L^2} =0.
\]
To this end, we first note that
\begin{align*}
	&\(U(-\sigma t)V_{\l}(\sigma t), D(t)G \)_{L^2} \\
% 	={}&i^{-\frac{d}2} \( D(2t)E(2t) \wha{w}(2t), D(t) U\(\frac{1}{2t}\)G \)_2 \\
	&{}=  i^{-\frac{d}{2}} \( E(\sigma t) e^{-i\l |\wha{u_+}|^{\frac2d}\log \sigma t}\wha{u_+}, D\(\frac{1}{2\sigma}\)\( U\(\frac{\sigma }{4t}\) -1\) G \)_{L^2} \\
	&{}\quad + i^{-\frac{d}{2}} \( E(\sigma t) e^{-i\l |\wha{u_+}|^{\frac2d}\log \sigma t}\wha{u_+}, D\(\frac{1}{2\sigma}\) G \)_{L^2}.
\end{align*}
The first term of the right hand side tends to zero as $t\to\I$ because of strong continuity of $U(t)G$.
By essentially the same argument as in \eqref{eq:pf1} for $n=1$, we see that the second term also tends to zero as $t\to\I$.
The only difference is that $F_1$ is replaced by $\wha{u}_+$ and that $p_3$, $p_4$ and $p_6$ are replaced by
\[
	\frac1{\widetilde{p_3}}= \frac{5d\tilde{d}+1}{10\tilde{d}(d+2)}, \quad
	\frac1{\widetilde{p_4}}= \frac{10d+10\tilde{d}-1}{10\tilde{d}(d+2)}, \quad
	\frac1{\widetilde{p_6}}= \frac{10d+10\tilde{d}+19}{10\tilde{d}(d+2)}, \quad
\]
respectively. By the choice, the estimate \eqref{eq:pf1_2} is replaced by
\[
	\norm{ |\nabla|^{\theta_0} \wha{u_+}}_{L^{\widetilde{p_3}}}
	\lesssim \norm{|\n|^{\frac{d}{d+2}} \wha{u_+}}_{L^2},
\]
which is acceptable.
\end{proof}

\subsection{Generalization of Theorem \ref{thm:main}}

It would be clear from the above proof that our argument can be applied to more types of behavior.
Here, we take a real-valued function $\phi(t,x)$ and
consider the asymptotic profile $V_\phi (t,x)$ of the following form
\begin{equation}\label{eq:Vphi}
	V_\phi (t,x) = e^{-i\frac{\pi}4 d} M(t) D(t) [e^{i\phi(t)}\wha{u_+}](x).
\end{equation}
One sees that our proof works if the property corresponding to \eqref{eq:pf1} is true.
Hence, we introduce the following assumption on the phase function $\phi$.
\begin{assumption}\label{asmp:gphase}
$\phi(t,x)$ is a real-valued function.
Suppose there exist positive numbers $a$ and $b$, $a<b$, such that
for any $f,g \in L^2(\R^d)$, $\sigma \in [a,b]$, and $n\neq 0$, it holds that
\[
	\lim_{t\to\I}\( e^{in(\sigma t|x|^2 + \phi(\sigma t))} f, g \)_{L^2} = 0.
\]
\end{assumption}
Intuitively, this assumption implies that $\phi$ does not cancel out oscillation.
A simple counter example is $\phi (t,x) =-t|x|^2 {\bf 1}_{\{ |x| \le 1\}}$.
For various types of phase, the assumption can be justified by the stationary phase.
\begin{theorem}\label{thm:general}
Let $d\ge1$.
Suppose that $\{g_n\}_n \in \ell^1(\Z)$ and $g_0 =1$. 
Suppose that $\phi$ satisfies Assumption \ref{asmp:gphase}.
If a solution $u(t)$ to \eqref{eq:NLS} on $[T,\I)$, $T\in \R$, satisfies
\begin{align*}
\lim_{t \to \I} \Lebn{u(t)-V_\phi(t)}{2} = 0, \\
\lim_{t \to \I} t^{\frac{d}{2(d+2)}} \norm{u(\cdot) - V_\phi(\cdot)}_{L^{\frac{2(d+2)}{d}}_{t,x}([t, \I)\times \R^d)} = 0,
\end{align*}
for some $u_+ \in H^{0,\frac{d}{d+2}}(\R^d)$, where $V_\phi(t)$ is given in \eqref{eq:Vphi},
then $u_{+} \equiv 0$.
\end{theorem}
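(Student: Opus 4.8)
The plan is to re-run the proof of Theorem~\ref{thm:main} with $V_\l$ replaced throughout by $V_\phi$, invoking Assumption~\ref{asmp:gphase} at the one place where the explicit logarithmic phase was previously used. It is convenient to integrate \eqref{eq:int} over the window $(at,bt)$ instead of $(t,2t)$, adjusting $G$ in \eqref{eq:HG} to $G=\int_a^b (2\sigma)^{-(1+d/2)}|\wha{u_+}(\cdot/2\sigma)|^{1+2/d}\,d\sigma$, so that after the substitution $s=\sigma t$ the rescaled dilation variable ranges exactly over the interval $[a,b]$ furnished by Assumption~\ref{asmp:gphase}. Pairing both sides against $H(t)=-iD(t)G$ then gives
\begin{align*}
	&\(-i\int_{at}^{bt} U(-s) F_0(u(s)) ds, H(t)\)_{L^2}\\
	&{}= (U(-bt)u(bt),H(t))_{L^2} - (U(-at)u(at),H(t))_{L^2}\\
	&\quad{}+ i\sum_{n\neq 0} g_n \( \int_{at}^{bt} U(-s) F_n(u(s)) ds , H(t)\)_{L^2}.
\end{align*}
The point that makes $H$ and $G$ reusable is that $F_0(z)=|z|^{1+2/d}$ depends only on $|z|$: since $|V_\phi|=|V_0|$, the computation giving $H(t)=-i\int_{at}^{bt}F_0(V_\phi(s))\,ds$ is insensitive to $\phi$, and $G$ is the same nonnegative function as in the $V_\l$ case.

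I then re-establish the three key lemmas for $V_\phi$. The analogue of Lemma~\ref{lem:key1} holds verbatim: gauge invariance of $F_0$ leaves the splitting into $I_1(t)$ and $I_2(t)$ unchanged, these being controlled solely by the space-time smallness of $u-V_\phi$ together with $\Lebn{V_\phi(s)}{p}=\Lebn{V_0(s)}{p}$, so the left-hand pairing tends to $\norm{G}_{L^2}^2$. For the analogue of Lemma~\ref{lem:key2}, the same reduction shows it suffices to prove $(U(-\sigma t)V_\phi(\sigma t),D(t)G)_{L^2}\to0$ for $\sigma=a,b$; after discarding the factor $U(\sigma/4t)\to\mathrm{Id}$, which is harmless by strong continuity of $U(\cdot)G$, this reduces to
\[
	\( E(\sigma t)\, e^{i\phi(\sigma t)}\,\wha{u_+},\; D\(\tfrac{1}{2\sigma}\) G \)_{L^2}\longrightarrow 0,
\]
which is exactly Assumption~\ref{asmp:gphase} with $n=1$, $f=\wha{u_+}\in L^2$, and $g=D(1/2\sigma)G\in L^2$.

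For the analogue of Lemma~\ref{lem:key3}, I first dispose of $F_n(u)-F_n(V_\phi)$ as in the original argument, using the bound $|F_n(z_1)-F_n(z_2)|\le C|n|(|z_1-z_2|^{1+2/d}+|z_1|^{2/d}|z_1-z_2|)$ and the hypotheses on $u-V_\phi$; this leaves the terms $\(\int_{at}^{bt}U(-s)F_n(V_\phi(s))\,ds,H(t)\)_{L^2}$ for each fixed $n\neq0$. The same computation as in the $V_\l$ case produces the oscillatory factor $E(n\sigma t)e^{in\phi(\sigma t)}=e^{in(\sigma t|x|^2+\phi(\sigma t))}$, so after again replacing $U(\sigma/4t)$ by $\mathrm{Id}$ the $\sigma$-integrand is
\[
	\( e^{in(\sigma t|x|^2+\phi(\sigma t))} F_n(\wha{u_+}),\; D\(\tfrac{1}{2\sigma}\)G \)_{L^2},
\]
which vanishes as $t\to\I$ for each $\sigma\in[a,b]$ by Assumption~\ref{asmp:gphase}; here $F_n(\wha{u_+})\in L^2$ because $|F_n(\wha{u_+})|=|\wha{u_+}|^{1+2/d}$ and $u_+\in H^{0,d/(d+2)}$. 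Two dominated-convergence passages — first in $\sigma$ over $[a,b]$, then in $n$ using $\{g_n\}\in\ell^1(\Z)$ together with the uniform-in-$n$ bound available exactly as in the proof of Lemma~\ref{lem:key3} — then force the entire $n\neq0$ sum to zero.

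Feeding the three limits into the pairing identity yields $\norm{G}_{L^2}^2=0$, hence $G\equiv0$; since $G$ is a nonnegative superposition of dilates of $|\wha{u_+}|^{1+2/d}$, a change of variables gives $\wha{u_+}\equiv0$ and therefore $u_+\equiv0$. The only step demanding care — and the sole genuine departure from Theorem~\ref{thm:main} — is verifying that the oscillatory phase generated by $F_n(V_\phi)$ is precisely $e^{in(\sigma t|x|^2+\phi(\sigma t))}$, matching the left member of Assumption~\ref{asmp:gphase}. Once this bookkeeping is confirmed, all the analytic work that in Theorem~\ref{thm:main} was carried out through the regularization, integration-by-parts, and Mihlin-multiplier estimates behind \eqref{eq:pf1} is absorbed into the hypothesis, so no further hard estimates arise.
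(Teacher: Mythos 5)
Your proposal is correct and is essentially the proof the paper intends: the paper itself only remarks that the argument for Theorem~\ref{thm:main} goes through once \eqref{eq:pf1} is replaced by Assumption~\ref{asmp:gphase}, and you have verified exactly that, including the key bookkeeping that the oscillatory factor arising from $F_n(V_\phi)$ is $e^{in(\sigma t|x|^2+\phi(\sigma t))}$ and that $F_0$, $G$, and $H$ are unchanged since $|V_\phi|=|V_0|$. Your shift of the window to $(at,bt)$ is a harmless (indeed unnecessary) variation, since the assumption for $\sigma\in[a,b]$ already yields the limit for any fixed $\sigma>0$ by rescaling $t$.
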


\section{Proof of Theorem \ref{thm:Barab}}
The strategy of the proof is similar to in Theorem \ref{thm:main}.
This argument can be compared with that in \cite{B,St}.
\begin{proof}
We consider a pairing of \eqref{eq:int} and $u_+$:
\begin{align}
	\begin{aligned}
	&\(-ig_1 \int_t^{2t} U(-s) F_1(u(s)) ds, u_+ \)_{L^2}\\
	&{}= (U(-2t)u(2t) -U(-t)u(t), u_+)_{L^2} \\
	&\quad{}+ i\sum_{n\neq 0, 1} g_n \( \int_t^{2t} U(-s) F_n(u(s)) ds , u_+ \)_{L^2}.
	\end{aligned}
	\label{bara:1}
\end{align}
By assumption \eqref{asmp:3}, the first term of the right hand side tends to zero as $t\to\I$.
Hence, we shall show
\begin{align}
	\lim_{t \to \I}\(\int_t^{2t} U(-s) F_1(u(s)) ds, u_+ \)_{L^2} =  \frac{\log 2}{2} \Lebn{\wha{u_+}}{\frac{2(d+1)}{d}}^{\frac{2(d+1)}{d}}. \label{bara:2}
\end{align}
and 
\begin{align}
	\sum_{n\neq 1} g_n \( \int_t^{2t} U(-s) F_n(u(s)) ds , u_+ \)_{L^2} \to 0 \label{bara:3}
\end{align}
as $t\to\I$. These estimates show $u_+\equiv0$.

Let us begin with \eqref{bara:2}.
Note that
\[
	\(\int_t^{2t} U(-s) F_1(u(s)) ds, u_+ \)_{L^2} =  \int_t^{2t} \( F_1(u(s)), U(s)u_+ \)_{L^2} ds.
\]
By an estimate similar to that of $I_1(t)$ in the proof of Lemma \ref{lem:key1}
and by \eqref{eq:Vl3}, we have
\[
	\int_t^{2t} \( F_1(u(s)), U(s)u_+ \)_{L^2} ds =
	\int_t^{2t} \( F_1(V_0(s)), V_0(s) \)_{L^2} ds  + o(1)
\]
as $t\to\I$.
A computation yields
\begin{align*}
	\int_t^{2t} \( F_1(V_0(s)), V_0(s) \)_{L^2} ds
	&{}= \int_t^{2t} \( F_1(\wha{u_+}), \wha{u_+} \)_{L^2} \frac{ds}{2s}
	= \frac{\log 2}{2} \norm{\wha{u_+}}_{L^{\frac{2(d+1)}d}}^{\frac{2(d+1)}d},
\end{align*}
which completes the proof of \eqref{bara:2}.

Let us next prove \eqref{bara:3}.
We see from \eqref{asmp:2} and \eqref{eq:Vl2} that
there exists $T>0$ independent of $n$ such that
\begin{align*}
	\abs{\( \int_t^{2t} U(-s) F_n(u(s)) ds , u_+\)_{L^2}}
	&{}\lesssim  \(t^{\frac{d}{2(d+2)}}\norm{u}_{L^{\frac{2(d+2)}d}_{t,x}((t,2t)\times \R^d)}\)^{1+\frac{2}{d}}\norm{u_+}_{L^2} \\
	&{}\lesssim (1+ \norm{\wha{u_+}}_{L^{\frac{2(d+2)}d}}^{1+\frac2{d}}) \norm{u_+}_{L^2}
\end{align*}
for any $n$ and any $t\ge T$. 
Hence, by means of $\{g_n\}_n \in \ell_1$, it suffices to show
\begin{align*}
	\( \int_t^{2t} U(-s) F_n(u(s)) ds , u_+ \)_{L^2} \to 0
\end{align*}
as $t \to \I$ for each $n \neq 0, 1$.
Arguing as in the proof of \eqref{bara:2}, we obtain
\[
	\( \int_t^{2t} U(-s) F_n(u(s)) ds , u_+ \)_{L^2}
	=\int_t^{2t} \( F_n(V_0(s)), V_0(s) \)_{L^2} ds + o(1)
\]
as $t\to\I$. Remark that
\begin{align*}
	&\int_t^{2t} \( F_n(V_0(s)), V_0(s) \)_{L^2} ds \\
	&{}= e^{-i\frac{(n-1)d}4}\int_1^{2} \( E((n-1)t\sigma)|\wha{u_+}|^{1+\frac2d-n}\wha{u_+}^n, \wha{u_+} \)_{L^2} \frac{d\s}{2\sigma}.
\end{align*}
As in the proof of \eqref{eq:pf1}, integration by parts with a standard density argument shows
this term tends to zero as $t\to\I$ as long as $n\neq1$.
\end{proof}

\section{Proof of Theorem \ref{thm:FB}}
We follow the test function method argument as in \cite{II2}.
\begin{proof}
Let $f\in L^1_{\mathrm{loc}}(\R^d)$ and
let $u(t,x)$ be a weak solution on $[0,T_0)$ with initial condition $u(0)=\eps f$. 
We may suppose that $T_0 > \frac12 T_{\max}(\eps f)$.

Set $\f(x) = \exp(1-\sqrt{1+|a x|^2})$, where $a>0$ is the number
such that $\int_{\R^d}\f =1$.
Remark that $\f(0) =1$ and there exists $M >0$ such that $|\D \f(x)| \le M \f(x)$ for all $x \in \R^d$.
We next set 
\begin{align*}
	\eta(t) = %\eta_T (t) = 
	\left\{
	\begin{aligned}
	&0 && t >1, \\
	&(1-t)^{\t} && 0 \le t \le 1,
	\end{aligned}
	\right.
\end{align*}
where $\theta\ge 1+d/2$. Remark that there exists $N >0$ such that
$|\d_t \eta| \le N |\eta|^{d/(d+2)}$ holds for all $t> 0$.
Then,  for any $R>0$ we denote
\begin{align*}
	\psi_R(t,x) := \eta_R(t)\f_R(x), \quad
	\eta_R (t) := \eta\(\frac{t}{R^2}\), \quad \f_R\(x\) := \f\(\frac{x}{R}\).
\end{align*} 
By density argument, we have
\begin{equation}\label{eq:pf3_1}
\begin{aligned}
	&{}\int_{[0, R^2) \times \R^d} u(t,x) \{ -i\pa_t (\psi_R(t,x)) + \D(\psi_R(t,x)) dt dx \\
	= {}& i\e \int_{\R^d} f(x) \f_R(x) dx + \sum_{n \in \Z}g_n\int_{[0,R^2) \times \R^d} F_n(u(t,x)) \psi_R(t,x) dtdx   
\end{aligned}
\end{equation}
for any $R>0$ such that $R^2 < T_0$. 
Then, the following is the key.
\begin{lemma} \label{lem:ii1}
% Let $d \ge 1$ and $\e>0$. Assume that $u$ is a weak solution to \eqref{eq:NLS} with $u(0) = \e f$ on $[0,T_{\e})$ for some $f \in L^{1}_{loc}(\R^d)$. Then 
There exists a constant $C=C(d,\mu, M,N)>0$ such that
\begin{align}
	-\e \int_{\R^d}\Im f(x) \f_R(x) dx \le C %\mu^{-\frac{d}2} %A(T)^{1+\frac{d}{2}}
	, \label{ii:est1}
\end{align}
holds as long as $R^2 < T_0$.
\end{lemma}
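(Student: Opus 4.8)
I would derive the bound from the real part of the identity \eqref{eq:pf3_1}. Since $\psi_R$ is real-valued, taking real parts and using $\Re(-iu)=\Im u$ turns \eqref{eq:pf3_1} into
\[
	\int_{[0,R^2)\times\R^d}\left(\Im(u)\,\pa_t\psi_R + \Re(u)\,\D\psi_R\right)dtdx
	= -\e\int_{\R^d}\Im f(x)\,\f_R(x)\,dx + \Re\sum_{n\in\Z}g_n\int_{[0,R^2)\times\R^d}F_n(u)\psi_R\,dtdx,
\]
so that the data term $-\e\int\Im f\,\f_R$, precisely the quantity to be bounded, sits on the right. The first point is that the gauge-critical term carries a definite sign: since $g_0=1$ and $F_0(u)=|u|^{1+2/d}\ge0$ while $|F_n(u)|=|u|^{1+2/d}$ for every $n$, and $\psi_R=\eta_R\f_R\ge0$, I would estimate
\[
	\Re\sum_{n\in\Z}g_n\int F_n(u)\psi_R\,dtdx \ge \Big(g_0-\sum_{n\neq0}|g_n|\Big)\int|u|^{1+\frac2d}\psi_R\,dtdx = \mu\int|u|^{1+\frac2d}\psi_R\,dtdx.
\]
Here the interchange of $\Re$ with the $\ell^1$ sum is legitimate because $\{g_n\}_n\in\ell^1$, and the identity \eqref{eq:pf3_1} already presupposes $\int|u|^{1+2/d}\psi_R<\I$. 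Rearranging, the task reduces to absorbing the left-hand side into this good term:
\[
	-\e\int\Im f\,\f_R + \mu\int|u|^{1+\frac2d}\psi_R \le \int|u|\,|\pa_t\psi_R|\,dtdx + \int|u|\,|\D\psi_R|\,dtdx.
\]

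The core of the argument is to control the two error integrals by Young's inequality with the conjugate pair $\frac{d+2}{d}$ and $\frac{d+2}{2}$. Writing $|u|\,|\pa_t\psi_R| = (|u|\psi_R^{d/(d+2)})\cdot(\psi_R^{-d/(d+2)}|\pa_t\psi_R|)$ and applying Young's inequality with $\delta=\mu/4$ gives, for each term,
\[
	\int|u|\,|\pa_t\psi_R| \le \frac\mu4\int|u|^{1+\frac2d}\psi_R + C_\mu\int\left(\psi_R^{-\frac{d}{d+2}}|\pa_t\psi_R|\right)^{\frac{d+2}2}dtdx,
\]
and similarly with $\pa_t\psi_R$ replaced by $\D\psi_R$. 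At this stage the structural bounds built into the cut-offs enter decisively: $|\pa_t\psi_R|\le NR^{-2}\eta_R^{d/(d+2)}\f_R$ (from $|\pa_t\eta|\le N|\eta|^{d/(d+2)}$, which is exactly what $\theta\ge1+d/2$ guarantees) and $|\D\psi_R|\le MR^{-2}\psi_R$ (from $|\D\f|\le M\f$). Substituting these makes the $\eta_R$ factor cancel in the first remainder and reduces both remainders to $R^{-(d+2)}$ times $\int_0^{R^2}\!\!\int_{\R^d}\f_R$ (respectively $\int\eta_R\f_R$), which by the parabolic scaling $\eta_R(t)=\eta(t/R^2)$, $\f_R(x)=\f(x/R)$ equals a constant multiple of $R^{-(d+2)}\cdot R^2\cdot R^d=R^0$. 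Both remainders are therefore bounded by a constant $C=C(d,\mu,M,N)$ independent of $R$.

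Combining the estimates yields $-\e\int\Im f\,\f_R + \frac\mu2\int|u|^{1+2/d}\psi_R\le C$, and discarding the nonnegative second term gives \eqref{ii:est1}. I expect the main obstacle to be the error-term estimate of the previous paragraph: one must check that the two Young exponents are exactly conjugate to $1+2/d$ and that, after inserting the pointwise bounds on $\pa_t\psi_R$ and $\D\psi_R$, the powers of $R$ cancel completely. This $R$-independence is the whole point, and it hinges on matching the time weight $\eta$ (hence the condition $\theta\ge1+d/2$) and the space weight $\f$ to the scaling $t\sim R^2$, $x\sim R$; any mismatch would leave a positive power of $R$ and destroy the uniform bound.
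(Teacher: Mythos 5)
Your proof is correct and follows essentially the same route as the paper: take the real part of \eqref{eq:pf3_1}, use $\mu=g_0-\sum_{n\neq 0}|g_n|>0$ together with $|F_n(u)|=F_0(u)$ to extract the good term $\mu\int |u|^{1+2/d}\psi_R$, and exploit the parabolic scaling of $\eta_R,\f_R$ (with $\theta\ge 1+d/2$) so that the powers of $R$ in the error terms cancel exactly. The only cosmetic difference is that you absorb the error integrals by pointwise Young's inequality with weight $\psi_R$, whereas the paper applies H\"older's inequality to reach $-\eps \Im J(R)+\mu I_0(R)\le C\, I_0(R)^{d/(d+2)}$ and then bounds $\sup_{x\ge 0}\bigl(Cx^{d/(d+2)}-\mu x\bigr)<\I$; the two absorption mechanisms are equivalent.
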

Once we obtain \eqref{ii:est1}, the proof is straight forward.
Suppose $T_{\max}(\eps f)>1$ for some $\eps>0$.
If such $\eps>0$ does not exist, the estimate \eqref{thm:ii1} is trivial with $C=\eps_0=1$.
The assumption on $f$ gives us
% \[
% 	\eps C_{k,R_0,\phi} R^{d-k} \le -\e \int_{\R^d}\Im f(x) \f_R(x) dx
% \]
\[
	\eps R^{d-k}\int_{|x|\ge R_0/R} |x|^{-k}\phi(x) dx \le -\e \int_{\R^d}\Im f(x) \f_R(x) dx
\]
for any $R>1/2$. Further, for any $R>1/2$, one has
\[
	\int_{|x|\ge R_0/R} |x|^{-k}\phi(x) dx \ge 
	\int_{|x|\ge 2R_0} |x|^{-k}\phi(x) dx =:C(k,R_0)
\]
if $k<d$ and
\[
	\int_{|x|\ge R_0/R} |x|^{-k}\phi(x) dx \ge C(k,R_0) \log R
\]
if $k=d$.
Plugging these estimates to
\eqref{ii:est1} with $R^2=\frac12 T_0>\frac14 T_{\max}(\eps f)$, 
we obtain \eqref{thm:ii1} with a constant $C=C(k,R_0,\mu_0)$.
Now, we chose $\eps_0>0$ so that %$C\eps_0^{-2/({d-k})}\ge 2$ holds 
the right hand side of \eqref{thm:ii1} is equal to two
with this constant $C$. Then, \eqref{thm:ii1} is true for all $\eps \in (0,\eps_0)$.
\end{proof}
\begin{proof}[Proof of Lemma \ref{lem:ii1}]
Let us introduce
\begin{align*}
	I_{n}(R) = \int_{[0,R^2) \times \R^d} F_n(u(t,x)) \psi_R(t,x) dx dt, \quad J(R) = \int_{\R^d} f(x) \f_R(x) dx. 
\end{align*}
Comparing real part of the both sides of \eqref{eq:pf3_1} and making a use of specific choice of $\psi_R$,
one deduces from H\"older's inequality that
\[
	-\e \Im J(R) + I_{0}(R) + \Re\( \sum_{n \neq 0}g_n I_{n}(R)\) 
	\le C_{d,M,N} I_{0}(R)^{\frac{d}{d+2}}.
\]
In view of $|I_{n}(R)| \le I_{0}(R)$, we have
\begin{align*}
	-\e \Im J(R) \le C_{d,M,N} I_{0}(R)^{\frac{d}{d+2}} - \mu I_{0}(R) \le C(d,\mu,M,N)
\end{align*}
as claimed.
\end{proof}
\subsection*{Acknowledgments} 
This work was done while the authors
were visiting at Department of Mathematics at the University of California,
Santa Barbara whose hospitality they gratefully acknowledge.
S.M. was supported by the Sumitomo Foundation Basic Science Research
Projects No.\ 161145 and by JSPS KAKENHI Grant Numbers JP17K14219, JP17H02854, and JP17H02851.
H.M. was supported by the Overseas Research Fellowship Program by National Institute of Technology.

\bibliographystyle{amsplain}
%\bibliography{homogeneous3}

\begin{bibdiv}
\begin{biblist}

\bib{B}{article}{
      author={Barab, Jacqueline~E.},
       title={Nonexistence of asymptotically free solutions for a nonlinear
  {S}chr\"odinger equation},
        date={1984},
        ISSN={0022-2488},
     journal={J. Math. Phys.},
      volume={25},
      number={11},
       pages={3270\ndash 3273},
         url={http://dx.doi.org/10.1063/1.526074},
      review={\MR{761850}},
}

\bib{FO}{article}{
      author={Fujiwara, Kazumasa},
      author={Ozawa, Tohru},
       title={Finite time blowup of solutions to the nonlinear {S}chr\"odinger
  equation without gauge invariance},
        date={2016},
        ISSN={0022-2488},
     journal={J. Math. Phys.},
      volume={57},
      number={8},
       pages={082103, 8},
         url={http://dx.doi.org/10.1063/1.4960725},
      review={\MR{3535686}},
}

\bib{GO}{article}{
      author={Ginibre, J.},
      author={Ozawa, T.},
       title={Long range scattering for nonlinear {S}chr\"odinger and {H}artree
  equations in space dimension {$n\geq 2$}},
        date={1993},
        ISSN={0010-3616},
     journal={Comm. Math. Phys.},
      volume={151},
      number={3},
       pages={619\ndash 645},
         url={http://projecteuclid.org/euclid.cmp/1104252243},
      review={\MR{1207269}},
}

\bib{GR}{book}{
      author={Grafakos, Loukas},
       title={Classical {F}ourier analysis},
     edition={Third},
      series={Graduate Texts in Mathematics},
   publisher={Springer, New York},
        date={2014},
      volume={249},
        ISBN={978-1-4939-1193-6; 978-1-4939-1194-3},
         url={http://dx.doi.org/10.1007/978-1-4939-1194-3},
      review={\MR{3243734}},
}

\bib{HN06FE}{article}{
      author={Hayashi, Nakao},
      author={Naumkin, Pavel~I.},
       title={Asymptotics in time of solutions to nonlinear {S}chr\"odinger
  equations in two space dimensions},
        date={2006},
        ISSN={0532-8721},
     journal={Funkcial. Ekvac.},
      volume={49},
      number={3},
       pages={415\ndash 425},
         url={http://dx.doi.org/10.1619/fesi.49.415},
      review={\MR{2297946}},
}

\bib{HN14}{article}{
      author={Hayashi, Nakao},
      author={Naumkin, Pavel~I.},
       title={On the new critical exponent for the nonlinear {S}chr\"odinger
  equations},
        date={2014},
        ISSN={1021-9722},
     journal={NoDEA Nonlinear Differential Equations Appl.},
      volume={21},
      number={3},
       pages={415\ndash 440},
         url={http://dx.doi.org/10.1007/s00030-013-0252-z},
      review={\MR{3211039}},
}

\bib{HNST}{article}{
      author={Hayashi, Nakao},
      author={Naumkin, Pavel~I.},
      author={Shimomura, Akihiro},
      author={Tonegawa, Satoshi},
       title={Modified wave operators for nonlinear {S}chr\"odinger equations
  in one and two dimensions},
        date={2004},
        ISSN={1072-6691},
     journal={Electron. J. Differential Equations},
       pages={No. 62, 16 pp. (electronic)},
      review={\MR{2047418}},
}

\bib{HNW}{article}{
      author={Hayashi, Nakao},
      author={Wang, Huimei},
      author={Naumkin, Pavel~I.},
       title={Modified wave operators for nonlinear {S}chr\"odinger equations
  in lower order {S}obolev spaces},
        date={2011},
        ISSN={0219-8916},
     journal={J. Hyperbolic Differ. Equ.},
      volume={8},
      number={4},
       pages={759\ndash 775},
         url={http://dx.doi.org/10.1142/S0219891611002561},
      review={\MR{2864547}},
}

\bib{II2}{article}{
      author={Ikeda, Masahiro},
      author={Inui, Takahisa},
       title={Small data blow-up of {$L^2$} or {$H^1$}-solution for the
  semilinear {S}chr\"odinger equation without gauge invariance},
        date={2015},
        ISSN={1424-3199},
     journal={J. Evol. Equ.},
      volume={15},
      number={3},
       pages={571\ndash 581},
         url={http://dx.doi.org/10.1007/s00028-015-0273-7},
      review={\MR{3394699}},
}

\bib{IW}{article}{
      author={Ikeda, Masahiro},
      author={Wakasugi, Yuta},
       title={Small-data blow-up of {$L^2$}-solution for the nonlinear
  {S}chr\"odinger equation without gauge invariance},
        date={2013},
        ISSN={0893-4983},
     journal={Differential Integral Equations},
      volume={26},
      number={11-12},
       pages={1275\ndash 1285},
         url={http://projecteuclid.org/euclid.die/1378327426},
      review={\MR{3129009}},
}

\bib{MM2}{article}{
      author={Masaki, Satoshi},
      author={Miyazaki, Hayato},
       title={Long range scattering for nonlinear schr\"odinger equations with
  general homogeneous nonlinearity},
        date={2016},
     journal={preprint},
      eprint={arXiv:1612.04524},
}

\bib{MMU}{article}{
      author={Masaki, Satoshi},
      author={Miyazaki, Hayato},
      author={Uriya, Kota},
       title={Long range scattering for nonlinear schr\"odinger equations with
  general homogeneous nonlinearity in three space dimensions},
        date={2017},
     journal={preprint},
      eprint={arXiv:1706.03491},
}

\bib{MS}{article}{
      author={Masaki, Satoshi},
      author={Segata, Jun-ichi},
       title={On the well-posedness of the generalized {K}orteweg--de {V}ries
  equation in scale-critical {$\hat L{}^r$}-space},
        date={2016},
        ISSN={2157-5045},
     journal={Anal. PDE},
      volume={9},
      number={3},
       pages={699\ndash 725},
         url={http://dx.doi.org/10.2140/apde.2016.9.699},
      review={\MR{3518534}},
}

\bib{MS5}{article}{
      author={Masaki, Satoshi},
      author={Segata, Jun-ichi},
       title={Modified scattering for the {K}lein-{G}ordon equation with the
  critical nonlinearity in three dimensions},
        date={2017},
     journal={to appear in Commun. Pure Appl. Anal.},
      eprint={arXiv:1703.04888},
}

\bib{MTT}{article}{
      author={Moriyama, Kazunori},
      author={Tonegawa, Satoshi},
      author={Tsutsumi, Yoshio},
       title={Wave operators for the nonlinear {S}chr\"odinger equation with a
  nonlinearity of low degree in one or two space dimensions},
        date={2003},
        ISSN={0219-1997},
     journal={Commun. Contemp. Math.},
      volume={5},
      number={6},
       pages={983\ndash 996},
         url={http://dx.doi.org/10.1142/S021919970300121X},
      review={\MR{2030566}},
}

\bib{Oz}{article}{
      author={Ozawa, Tohru},
       title={Long range scattering for nonlinear {S}chr\"odinger equations in
  one space dimension},
        date={1991},
        ISSN={0010-3616},
     journal={Comm. Math. Phys.},
      volume={139},
      number={3},
       pages={479\ndash 493},
         url={http://projecteuclid.org/euclid.cmp/1104203467},
      review={\MR{1121130}},
}

\bib{Sh}{article}{
      author={Shimomura, Akihiro},
       title={Nonexistence of asymptotically free solutions for quadratic
  nonlinear {S}chr\"odinger equations in two space dimensions},
        date={2005},
        ISSN={0893-4983},
     journal={Differential Integral Equations},
      volume={18},
      number={3},
       pages={325\ndash 335},
      review={\MR{2122723}},
}

\bib{STo}{article}{
      author={Shimomura, Akihiro},
      author={Tonegawa, Satoshi},
       title={Long-range scattering for nonlinear {S}chr\"odinger equations in
  one and two space dimensions},
        date={2004},
        ISSN={0893-4983},
     journal={Differential Integral Equations},
      volume={17},
      number={1-2},
       pages={127\ndash 150},
      review={\MR{2035499}},
}

\bib{STs}{article}{
      author={Shimomura, Akihiro},
      author={Tsutsumi, Yoshio},
       title={Nonexistence of scattering states for some quadratic nonlinear
  {S}chr\"odinger equations in two space dimensions},
        date={2006},
        ISSN={0893-4983},
     journal={Differential Integral Equations},
      volume={19},
      number={9},
       pages={1047\ndash 1060},
      review={\MR{2262096}},
}

\bib{St}{incollection}{
      author={Strauss, Walter},
       title={Nonlinear scattering theory},
        date={1974},
   booktitle={Scattering theory in mathematical physics},
      editor={Lavita, J.~A.},
      editor={Marchand, J.-P.},
   publisher={Reidel, Dordrecht, Holland},
       pages={53\ndash 78},
}

\bib{MR2318286}{article}{
      author={Visan, Monica},
       title={The defocusing energy-critical nonlinear {S}chr\"odinger equation
  in higher dimensions},
        date={2007},
        ISSN={0012-7094},
     journal={Duke Math. J.},
      volume={138},
      number={2},
       pages={281\ndash 374},
         url={http://dx.doi.org/10.1215/S0012-7094-07-13825-0},
      review={\MR{2318286}},
}

\bib{QZ1}{article}{
      author={Zhang, Qi~S.},
       title={Blow-up results for nonlinear parabolic equations on manifolds},
        date={1999},
        ISSN={0012-7094},
     journal={Duke Math. J.},
      volume={97},
      number={3},
       pages={515\ndash 539},
         url={http://dx.doi.org/10.1215/S0012-7094-99-09719-3},
      review={\MR{1682987}},
}

\bib{QZ2}{article}{
      author={Zhang, Qi~S.},
       title={A blow-up result for a nonlinear wave equation with damping: the
  critical case},
        date={2001},
        ISSN={0764-4442},
     journal={C. R. Acad. Sci. Paris S\'er. I Math.},
      volume={333},
      number={2},
       pages={109\ndash 114},
         url={http://dx.doi.org/10.1016/S0764-4442(01)01999-1},
      review={\MR{1847355}},
}

\end{biblist}
\end{bibdiv}

\end{document}